\documentclass[12pt,a4paper,reqno]{amsart}
\usepackage[hmargin=2.5cm,bmargin=2.5cm,tmargin=3cm]{geometry}
\usepackage{enumerate}
\usepackage{amsmath,amsthm,amssymb,amsfonts,latexsym}
\usepackage[gen]{eurosym}
\usepackage{orcidlink}
\usepackage[normalem]{ulem}
\usepackage[german,english]{babel}
\usepackage{graphicx}
\usepackage{tikz}
\usepackage{xfrac}
\usetikzlibrary{shapes,arrows,decorations.pathmorphing,backgrounds,fit,positioning,shapes.symbols,chains,graphs,graphs.standard}
\usepackage{dsfont}
\usepackage{upgreek}
\usepackage{tikz-cd}
\usepackage{caption}
\usepackage{faktor}
\setcounter{secnumdepth}{5}
\allowdisplaybreaks
\usepackage{latexsym}

\usepackage{tikz}					

\newcommand{\Tr}{{\mathop{\mathrm{Tr} \,}}}

\newcommand{\RR}{\mathbb{R}}

%


\newtheorem{theorem}{Theorem}
\newtheorem{lemma}[theorem]{Lemma}
\newtheorem{definition}[theorem]{Definition}
\newtheorem{proposition}[theorem]{Proposition}

\newtheorem{remark}[theorem]{Remark}

\newtheorem{example}[theorem]{Example}
\newcommand{\be}{\begin{equation}}
\newcommand{\ee}{\end{equation}}
\newcommand{\bea}{\begin{eqnarray*}}
	\newcommand{\eea}{\end{eqnarray*}}
\newcommand{\beq}{\begin{eqnarray}}
\newcommand{\eeq}{\end{eqnarray}}

\newtheorem{prop}[theorem]{Proposition}



\usepackage{todonotes}


\title[Surface area of graphs, connectivity measures and spectral estimates]{On the surface area of graphs, related connectivity measures and spectral estimates} 

\subjclass[2010]{}

\keywords{}

\author[P.~Bifulco]{Patrizio Bifulco\orcidlink{0009-0004-0628-374X}}
\author[J.~Kerner]{Joachim Kerner\orcidlink{0000-0003-0638-4183}}

\address{Lehrgebiet Analysis, Fakult\"at Mathematik und Informatik, Fern\-Universit\"at in Hagen, D-58084 Hagen, Germany}
\email{patrizio.bifulco@fernuni-hagen.de}

\address{Lehrgebiet Angewandte Stochastik, Fakult\"at Mathematik und Informatik, Fern\-Universit\"at in Hagen, D-58084 Hagen, Germany}
\email{joachim.kerner@fernuni-hagen.de}

\date{\today}

\thanks{
}

\begin{document}

	\begin{abstract} In this note we elaborate on some notions of surface area for discrete graphs which are closely related to the inverse degree. These notions then naturally lead to associated connectivity measures of graphs and to the definition of a special class of large graphs, called social graphs, that might prove interesting for applications. In addition, we derive spectral estimates involving the surface area and, as a main result, present an upper bound on the second eigenvalue for planar graphs which in some cases improves upon existing bounds from \cite{SpTe} and \cite{MP21}.
	\end{abstract}
	
\maketitle

	\section{Introduction}
	
	Discrete graphs have quite a long and rich history in mathematics with many and important applications in other sciences ranging from chemistry to computer science. A related important concept is that of metric graphs since they provide a fruitful link between one-dimensional and higher-dimensional aspects. In the realm of mathematical physics, metric graphs often go under the name of quantum graphs whenever one studies differential operators on such graphs that are motivated by quantum mechanics~\cite{BerkolaikoBook,KurasovBook}. A classical research direction on graphs tries to understand the connection between geometrical properties of the graph and spectral properties of operators defined on them~\cite{Chung}. For instance, a well-known operator on graphs is the so-called Laplacian which is a positive operator that has, on a finite discrete graph, as many eigenvalues as there are vertices (counting the eigenvalues with multiplicity). The first eigenvalue is always equal to zero and its multiplicity equals the number of connected components of the underlying graph. In other words, the second eigenvalue is non-zero if and only if the graph is connected. This somewhat surprising connection between the connectivity properties of the graph and the second eigenvalue of the Laplacian can be made even more precise in terms of so-called Cheeger inequalities which also establish a close connection to differential geometry~\cite{CheegerJeff,Fiedler1973,DodziukCheeger,ALON198573,DodziukKendall,MOHAR1989274,MOHAR1988119,Chung2007FourPF,KellerBauerCheeger}. In the same vein, it is well-known in spectral graph theory that the largest eigenvalue of the normalized Laplacian equals two if and only if the graph is bipartite. 
	
The present note mostly falls into the category of spectral graph theory. As we will see, from a spectral point of view, the central object of our discussion is what we call the effective surface area of a graph which is a hybrid version of the (combinatorial) surface area and a so-called external potential associated with the underlying discrete Schrödinger operator. Here, the term Schrödinger operator simply refers to the sum of the normalized Laplacian and some diagonal matrix which represents this external potential. The notion of surface area discussed in this note was introduced recently in~\cite{BK} while studying Schrödinger operators on metric graphs. As we will see in the course of the paper, there is a close connection of this notion to other relevant quantitites of graphs such as the inverse degree and the Randić index~\cite{RandicIndex,Li2008ASO,RInverseDegree}. Also, although the surface area turns out to be, on a formal level, identical to the inverse degree, the important difference lies in the interpretation thereof. Based on the notion of a surface area, we shall then define connectivity measures for graphs which naturally leads to the introduction of a special subclass of large graphs -- so-called social graphs. Roughly speaking, a sequence of (volume increasing) graphs is a sequence of social graphs if and only if the measure of connectivity diverges which happens, for example, if the surface area remains bounded along the sequence. In this sense, social graphs are very large and highly connected graphs and since they are so well-connected, they appear to have a relatively small surface area.

The note is structured as follows: In Section~\ref{SectionSetting} we introduce the relevant graph-theoretical terms as well as the discrete normalized Schrödinger operator whose spectral properties are to studied later on. In Section~\ref{SectionSurfaceArea} we define the (effective) surface area of a graph, we introduce two connectivity measures and define sequences of social graphs. In Section~\ref{Section4} we discuss connections to Cheeger's constant and we apply surgery principles to illustrate the geometrical interpretation of the surface area. In Section~\ref{sec:spectral-estimates} we derive some spectral estimates (upper and lower bounds to eigenvalues) in terms of the surface area and other related quantities such as the Randić index. Most importantly, in Section~\ref{AppendixSpielman}, we derive a novel upper bound on the second eigenvalue for planar graphs which explicitly contains the surface area and which, for some planar graphs, improves existing results by Spielman and Teng \cite{SpTe}, and by Plümer \cite{MP21}. 
	\section{The setting}\label{SectionSetting}
	We study discrete (connected) graphs $\Gamma=\Gamma(E,V)$ with finite vertex set $V$ and finite edge set $E \subset \{\{v,w\} \: : \: v,w \in V \}$. Two vertices $v,w \in V$ are called \emph{adjacent} whenever $\{v,w\} \in E$. Moreover, we call an edge $e \in E$ \emph{incident} to a vertex $v \in V$ whenever there exists another vertex $w \in V$ such that $\{v,w\}  \in E$. We also denote by $n:=|V| \in \mathbb{N}$ the number of vertices. For simplicity, we number the vertices in $V$ by the natural numbers from $1$ to $n$.
	
	In this note  we consider the discrete normalized Schrödinger operator
	\begin{equation}\label{SchrödingerOperator}
	H_U=\mathrm{Id}-D^{-1/2}(A-U)D^{-1/2}
	\end{equation} 
	where $A=(a_{ij}) \in \mathbb{R}^{n \times n}$ is the symmetric \emph{adjacency matrix} of the graph $\Gamma$ and $U=\mathrm{diag}(\sigma_1,\dots,\sigma_n) \in \mathbb{R}^{n \times n}$ denotes a diagonal matrix describing an external potential induced by a set of real (non-negative) parameters $\sigma=(\sigma_1,\dots,\sigma_n) \in \mathbb{R}^n_+:= [0,\infty)$. Note that $\mathrm{Id} \in \mathbb{R}^{n \times n}$ stands for the usual identity matrix on $\mathbb{R}^{n}$. The \emph{degree} of some vertex $j \in V$ is defined via 
$$\deg(j):=\sum_{i=1}^{n}a_{ij}\ ,$$
whereas the \emph{unweighted degree} of some vertex $j \in V$ is defined as 
\[
\deg_{\Gamma}(j) := \sum_{i=1: a_{ij} \neq 0}^{n}(1+\delta_{ij})\ .
\]
Here, $\delta_{ij}$ denotes the usual Kronecker delta. Note that, since we assume the graph to be connected, we also have that $\deg(j) > 0$ for all $j \in V$. Intuitively, the unweighted degree counts the number of edges emanating from a vertex. We call a graph unweighted whenever $a_{ij} \in \{0,1\}$ for all $i,j \in V$ and loop-free whenever $a_{ii}=0$ for all $i \in V$. Based on this, $D=\mathrm{diag}(\deg(1),\dots,\deg(n)) \in \mathbb{R}^{n \times n}$ is the degree matrix.
	
	Endowing  the Euclidean space $\mathbb{R}^n$ with the usual standard inner product $\langle \cdot, \cdot \rangle_{\mathbb{R}^n}$, the quadratic form associated with the self-adjoint linear operator $H_U$ is given by 
	\begin{equation}\label{QuadraticForm}
	\begin{split}
	\langle f,H_U f\rangle_{\mathbb{R}^{n}}&= \|f\|^2_{\mathbb{R}^{n}}-\sum_{i,j=1}^{n}\frac{a_{ij}-\delta_{ij}\sigma_{i}}{\sqrt{\deg(i)\deg(j)}}f_if_j \ ,\\
	&=\frac{1}{2}\sum_{i,j=1}^{n}a_{ij}\left(\frac{f_i}{\sqrt{\deg(i)}}-\frac{f_j}{\sqrt{\deg(j)}}\right)^2+\sum_{i=1}^{n}\frac{\sigma_i}{\deg(i)}f_i^2\ .
	\end{split}
	\end{equation}
    for $f \in \mathbb{R}^{n}$. Later we will work with the form~\eqref{QuadraticForm} in order to obtain bounds on the eigenvalues $\lambda_j(U)$, $j=1,\dots,n$, of $H_U$.
	\section{The surface area of a graph, social graphs and related connectivity measures}\label{SectionSurfaceArea}
	A main focus of this note is on the discussion of two notions of surface area in the context of discrete graphs which were introduced recently in~\cite{BK} studying metric graphs. In that paper, the authors generalized results from~\cite{RWY} obtained for planar domains in $\mathbb{R}^2$ to the setting of metric graphs. More explicitly, comparing the eigenvalues of two self-adjoint realisations of Laplacians on a given domain or (metric) graph, one may derive an explicit expression for the Cesàro average of the differences of the corresponding $n$-th eigenvalues. In the case of a bounded domain $\Omega \subset \mathbb{R}^2$ with smooth enough boundary, this limiting value is given by 
	\begin{equation}\label{LVD}
	\frac{2\sigma|\partial \Omega|}{|\Omega|} \ ,
	\end{equation} 
	where $\sigma > 0$ is a parameter related to the imposed (Robin-)boundary conditions along the boundary $\partial \Omega$ of $\Omega$. For a metric graph of given graph length $\mathcal{L} > 0$ (intuitively, $\mathcal{L}$ is simply the sum of all edge lengths of the metric graph), the analogous limiting value is given by
	\begin{equation}\label{LVG}
	\frac{2}{\mathcal{L}} \sum_{j=1}^{n}\frac{\sigma_j}{\deg(j)} \ ,
	\end{equation}
	where $\sigma_j > 0$, $j \in V$, is a number that determines the matching conditions in the $j$-th vertex (note that, for metric graphs, such matching conditions are necessary in order to obtain a self-adjoint operator. Loosely speaking, one should think of the matching conditions as something originating from external potentials localized on the vertices). Consequently, comparing \eqref{LVD} with \eqref{LVG}, one arrives at the following definition.
	\begin{definition}[Surface area]\label{DefinitionSurfaceArea} Let $\Gamma$ denote a finite discrete graph. Then 
		\begin{equation}\label{Circ}
	\mathcal{S}(\Gamma):=\sum_{j=1}^{n}\frac{1}{\deg(j)}
	\end{equation}
	denotes the surface area of $\Gamma$. For a finite discrete graph $\Gamma$ with external potential $U$, the effective surface area is given by 
	\begin{equation}\label{EffCirc}
	\mathcal{S}_{U}(\Gamma):=\sum_{j=1}^{n}\frac{\sigma_j}{\deg(j)} \ .
	\end{equation}
	\end{definition}
	\begin{remark} Note that the quantities $\mathcal{S}(\Gamma)$ and $\mathcal{S}_U(\Gamma)$ were initially called (effective) circumference in \cite[Remark~6]{BK}. However, since there already exists a different, established and not directly related definition of circumference in classical graph theory, meaning the length of the longest cycle in the underlying graph, we prefer to rename the quantities accordingly. We shall remark that also the term surface area (but with a different meaning) has appeared before in the literature~\cite{IMANI2009560}, dating back to a PhD thesis of B.~Broeg (Oregon State University, 1995), but it does not seem to be canonical.
	\end{remark}
	
It is interesting to observe that the surface area $\mathcal{S}(\Gamma)$ of a graph $\Gamma$ is -- on a formal level -- equivalent to a well-known quantity in graph theory -- the so-called \textit{inverse degree} (cf., for instance, \cite{EPS,DSB,MUKWEMBI2010940}). Therefore, one contribution of the present paper is to provide a new interpretation thereof. Classical results about the inverse degree include an upper bound on the \emph{diameter} $\mathrm{diam}(\Gamma)$ of an unweighted loop-free graph (i.e, the largest combinatorial distance between vertices in the graph) in terms of the inverse degree and the number of vertices. This estimate has been derived in~\cite{EPS} and later improved by Dankelmann, Swart and van den Berg in \cite[Theorem~1]{DSB}: it reads
\[
\mathrm{diam}(\Gamma) \leq (3\mathcal{S}(\Gamma) + 2 + o(1))\frac{\log n}{\log \log n} \ .
\]
It is interesting to remark that this bound grows only very slowly in the number of vertices. At this point we shall also mention that connections of the inverse degree to other quantities of a graph were suggested by the computer program GRAFFITI, leading to the so-called GRAFFITI conjectures~\cite{FAJTLOWICZWallerI,FAJTLOWICZWallerII,FAJTLOWICZGraffiti,ZHANG2004369}.

	In a next step, we establish a theorem which relates the effective surface area of a graph $\Gamma$ to the eigenvalues of $H_U$; this result should be compared with \cite[Theorem~1]{BK} obtained for Schrödinger operators on metric graphs. Quite surprisingly, in the discrete setting considered here, the corresponding statement is a relatively immediate consequence of elementary matrix operations. Furthermore, it is an \textit{exact} relation and not only a statement about limiting values. Nevertheless, as will become clear later, it is a remarkable statement that allows one to attribute a meaningful interpretation to the effective surface area $\mathcal{S}_U(\Gamma)$.
	\begin{theorem}[Trace formula]\label{TraceTheorem} Let $\Gamma$ be a finite discrete graph and $H_U$ a corresponding Schrödinger operator. Then
		\begin{equation*}\begin{split}
		\sum_{j=1}^{n}\lambda_j(U)=n-\sum_{j=1}^{n}\frac{a_{jj}}{\deg(j)}+\mathcal{S}_U(\Gamma)\ ,
		\end{split}
		\end{equation*}
		and 
		\begin{equation*}
		\sum_{j=1}^{n}\left(\lambda_j(U)-\lambda_j(0)\right)=\mathcal{S}_U(\Gamma)\ . 
		\end{equation*}
	\end{theorem}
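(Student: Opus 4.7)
The plan is to exploit the elementary fact that the sum of eigenvalues of a self-adjoint matrix equals its trace, so that both identities follow from a short computation with the defining formula \eqref{SchrödingerOperator}.

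First I would write
\[
\sum_{j=1}^{n}\lambda_j(U) \;=\; \mathrm{Tr}(H_U) \;=\; \mathrm{Tr}(\mathrm{Id}) - \mathrm{Tr}\!\left(D^{-1/2}(A-U)D^{-1/2}\right) \;=\; n - \mathrm{Tr}\!\left(D^{-1/2}(A-U)D^{-1/2}\right).
\]
Using the cyclic property of the trace (or observing directly that $D^{-1/2}$ is diagonal, hence commutes with itself), the remaining trace can be rewritten as $\mathrm{Tr}(D^{-1}(A-U))$. Since $D^{-1}$ and $U$ are diagonal, the diagonal entries of $D^{-1}(A-U)$ are $(a_{jj}-\sigma_j)/\deg(j)$, so that
\[
\mathrm{Tr}\!\left(D^{-1}(A-U)\right) \;=\; \sum_{j=1}^{n}\frac{a_{jj}}{\deg(j)} \;-\; \sum_{j=1}^{n}\frac{\sigma_j}{\deg(j)} \;=\; \sum_{j=1}^{n}\frac{a_{jj}}{\deg(j)} - \mathcal{S}_U(\Gamma),
\]
where in the last step the definition \eqref{EffCirc} of the effective surface area is invoked. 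Substituting this into the expression for $\sum_j \lambda_j(U)$ yields the first identity.

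For the second identity I would simply apply the first identity once with the given $U$ and once with $U=0$, noting that $\mathcal{S}_0(\Gamma)=0$ since all $\sigma_j$ vanish, and then subtract the two relations. The term $n-\sum_j a_{jj}/\deg(j)$ cancels, leaving $\sum_j(\lambda_j(U)-\lambda_j(0))=\mathcal{S}_U(\Gamma)$.

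There is no real obstacle here; the only subtlety worth pointing out in the write-up is that $D$ is invertible precisely because connectedness of $\Gamma$ guarantees $\deg(j)>0$ for every $j\in V$, so that $D^{-1/2}$ (and hence $H_U$) is well-defined, and that $U$ being diagonal is what makes the diagonal entries of $D^{-1}U$ collapse to $\sigma_j/\deg(j)$ without cross terms.
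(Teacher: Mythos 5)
Your proposal is correct and follows essentially the same route as the paper: identify the eigenvalue sum with $\Tr(H_U)$, use cyclicity of the trace to reduce to $\Tr(D^{-1}(A-U))$, read off the diagonal entries, and obtain the second identity by specializing to $U=0$ and subtracting. The remark about connectedness guaranteeing $\deg(j)>0$ is a nice touch but does not change the argument.
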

	\begin{proof} The second identity follows immediately from the first by setting $U=0$ and by observing that $\sum_{j=1}^n \lambda_j(0) = n - \sum_{j=1}^{n}\frac{a_{jj}}{\deg(j)}$. 
		
		To prove the first identity, we start -- noting that the trace is cyclic -- with 
		\begin{equation*}\begin{split}
		\sum_{j=1}^{n}\lambda_j(U)&=\Tr(H_{U})=n-\Tr(D^{-1}(A-U))\ .
		\end{split}
		\end{equation*}
		Furthermore, one observes that
		\begin{equation*}\begin{split}
		\Tr(D^{-1}(A-U))&=\sum_{j=1}^{n}(D^{-1}(A-U))_{jj}=\sum_{i=1}^{n}\sum_{j=1}^{n}(D^{-1})_{ij}(A-U)_{ji} \\
		&=\sum_{j=1}^{n}\frac{a_{jj}-\sigma_{j}}{\deg(j)} = \sum_{j=1}^{n}\frac{a_{jj}}{\deg(j)} - \mathcal{S}_U(\Gamma)\ ,
		\end{split}
		\end{equation*}
		which gives the statement.
	\end{proof}
	\begin{remark}\label{InterEffCirc} Theorem~\ref{TraceTheorem} has an important interpretation which is most visible in the context of large graphs. Imagine a graph $\Gamma$ with a large number of vertices and whose vertex set can be decomposed in a large set of vertices $V_1$ with a large degree and a small set of vertices $V_2$ with a relatively low degree. In such a case, very loosely speaking and assuming the external potential is uniformly bounded,
		\begin{equation*}
		\mathcal{S}_U(\Gamma)\approx \sum_{v \in V_2}\frac{\sigma_j}{\deg(j)}\ .
		\end{equation*}
		Consequently, one concludes that the overall impact of the external potential $U$ on all! eigenvalues of a large graph can be approximated knowing the potential on only a relatively small number of vertices!
	\end{remark}
	From Definition~\ref{DefinitionSurfaceArea} it is not readily clear in what sense $\mathcal{S}(\Gamma)$ can be understood as an area measure. More explicitly, considering a sequence of graphs $(\Gamma_k)_{k \in \mathbb{N}}$ with a growing number of vertices, one would expect the surface area to grow more slowly in the number of vertices than the volume of the graph (which, by definition, is equal to the number of vertices). As will be shown in the following statement, this is indeed not the case for sequences of graphs often considered in the literature (such as planar graphs, graphs with bounded degree, expander graphs).
	\begin{theorem}[Surface area of graphs with bounded average degree]\label{TheoremCircumPlanar} Let $(\Gamma_k)_{k \in \mathbb{N}}$ be a sequence of finite graphs on $(n_k)_{k \in \mathbb{N}}$ vertices and assume that there exists a constant $C > 0$ such that 
		\begin{equation*}
		\frac{\sum_{j \in V_k}\deg(j) }{n_k} \leq C\ , \quad \forall k \in \mathbb{N}\ .
		\end{equation*}
		 Then there exists a constant $c > 0$ such that 
		 \begin{equation*}
		 c \cdot n_k\leq \mathcal{S}(\Gamma_k)
		 \end{equation*}
		 holds for all $k\in \mathbb{N}$.
	\end{theorem}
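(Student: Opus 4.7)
The plan is to use the classical harmonic-arithmetic mean inequality (equivalently, a single application of Cauchy--Schwarz) to compare the surface area $\mathcal{S}(\Gamma_k)$, which is a sum of reciprocals of degrees, with the total degree $\sum_{j \in V_k} \deg(j)$, which by hypothesis grows at most linearly in $n_k$.

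Concretely, I would first observe that for any fixed $k$, writing $d_j := \deg(j)$ for $j \in V_k$, Cauchy--Schwarz applied to the vectors with components $1/\sqrt{d_j}$ and $\sqrt{d_j}$ yields
\begin{equation*}
n_k^2 \;=\;\Bigl(\sum_{j \in V_k} 1\Bigr)^{\!2} \;\leq\; \Bigl(\sum_{j \in V_k} \tfrac{1}{d_j}\Bigr)\Bigl(\sum_{j \in V_k} d_j\Bigr) \;=\; \mathcal{S}(\Gamma_k)\cdot \sum_{j \in V_k} \deg(j) \, .
\end{equation*}
Note that this makes sense since, as the graphs are assumed to be (tacitly) connected, $d_j > 0$ for every vertex.

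Rearranging and then invoking the hypothesis $\sum_{j \in V_k} \deg(j) \leq C \cdot n_k$ gives
\begin{equation*}
\mathcal{S}(\Gamma_k) \;\geq\; \frac{n_k^2}{\sum_{j \in V_k} \deg(j)} \;\geq\; \frac{n_k^2}{C\, n_k} \;=\; \frac{1}{C}\cdot n_k \, ,
\end{equation*}
so the claim follows with $c := 1/C$.

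There is no real obstacle here: the argument is a one-line application of Cauchy--Schwarz combined with the hypothesis on the average degree. The only thing worth highlighting is that the estimate is sharp up to the constant, since equality in Cauchy--Schwarz is attained precisely when all degrees are equal, so $d$-regular graphs (with $d \leq C$) already realise $\mathcal{S}(\Gamma_k) = n_k/d$, showing that the linear growth in $n_k$ cannot be improved under the stated hypothesis alone.
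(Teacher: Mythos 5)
Your argument is correct and is essentially identical to the paper's own proof: both apply Cauchy--Schwarz to the vectors with entries $1/\sqrt{\deg(j)}$ and $\sqrt{\deg(j)}$ to get $n_k^2 \leq \mathcal{S}(\Gamma_k)\sum_{j \in V_k}\deg(j)$ and then invoke the bounded-average-degree hypothesis to conclude with $c = 1/C$. Your added remark on sharpness for regular graphs is a nice (correct) bonus not present in the paper.
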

	\begin{proof} Using the Cauchy-Schwarz inequality, one observes
		\begin{equation*}
		n_k^2=\bigg(\sum_{j \in V_k}1\bigg)^2 \leq \mathcal{S}(\Gamma_k) \cdot \sum_{j \in V_k}\deg(j) \ ,
		\end{equation*}
		which implies 
		\begin{equation*}\begin{split}
		\frac{n_k}{C}\leq \frac{n_k^2}{\sum_{j \in V_{k}}\deg(j) }\leq \mathcal{S}(\Gamma_k)
		\end{split}
		\end{equation*}
		and hence the statement.
	\end{proof}
	%
	%
    Clearly, graphs contained in the setup of Theorem \ref{TheoremCircumPlanar} have degrees that cannot grow too fast. For instance, all unweighted planar graphs have an average degree less than $6$.
	On a more abstract level, Theorem~\ref{TheoremCircumPlanar} shows that for sequences of graphs with bounded average degree, the surface area is of the same size as the volume of the graphs. From this one may conclude that there is no clear distinction between the ``surface'' and the ``interior'' for such graphs. This somewhat negative result then leads in a natural way to the following definition.
	\begin{definition}[Social graphs]\label{SocialGraphs} A sequence $(\Gamma_k)_{k \in \mathbb{N}}$ of finite graphs with associated monotonically increasing sequence of number of vertices $(n_k)_{k \in \mathbb{N}}$ is called a sequence of social (or lump) graphs if 
		\begin{equation*}
		\frac{\mathcal{S}(\Gamma_k)}{n_k} \rightarrow 0 \quad \text{ as} \:\: k \rightarrow \infty\ .
		\end{equation*}
	\end{definition} 
\begin{example}[Complete graphs]\label{ex:complete-graphs} We set $\Gamma_n=K_n$ where $K_n$ is the $n$-complete graph over $n \in \mathbb{N}$ vertices. In this case one has
	\begin{equation*}
	\mathcal{S}(K_n)=\sum_{j=1}^{n}\frac{1}{\deg(j)}=\frac{n}{n-1}\ ,
	\end{equation*} 
	which means that $(K_n)_{n \in \mathbb{N}}$ forms a sequence of social graphs. From this one also concludes that a sequence of social graphs is a sequence where the degrees of the vertices grow fast enough which then implies that the vertices of the graph are highly connected. 
\end{example}
\begin{minipage}[b]{0.33\textwidth}
\begin{center}
\begin{tikzpicture}[scale=0.3]
  \graph { subgraph K_n [n=7,clockwise,radius=1.5cm,empty nodes, nodes={draw, circle, fill=gray, minimum size=.05cm,inner sep=0pt}] };
\end{tikzpicture}
     \captionsetup{font=footnotesize}
     \captionof*{figure}{$K_7$}
\end{center}
\end{minipage}
\begin{minipage}[b]{0.33\textwidth}
\begin{center}
\begin{tikzpicture}[scale=0.3]
  \graph { subgraph K_n [n=9,clockwise,radius=1.5cm,empty nodes, nodes={draw, circle, fill=gray, minimum size=.05cm,inner sep=0pt}] };
\end{tikzpicture}
     \captionsetup{font=footnotesize}
     \captionof*{figure}{$K_9$}
\end{center}
\end{minipage}
\begin{minipage}[b]{0.34\textwidth}
\begin{center}
\begin{tikzpicture}[scale=0.3]
  \graph { subgraph K_n [n=16,clockwise,radius=1.5cm,empty nodes, nodes={draw, circle, fill=gray, minimum size=.05cm,inner sep=0pt}] };
\end{tikzpicture}
     \captionsetup{font=footnotesize}
     \captionof*{figure}{$K_{16}$}
\end{center}
\end{minipage}
	Since the surface area of a graph is small whenever the degrees of the vertices are large, it seems plausible to introduce \textit{measures of connectivity} based on the surface area.
	\begin{definition}[Connectivity measure]\label{MeasureConnectivity} For a finite graph $\Gamma$ on $n \in \mathbb{N}$ vertices we introduce
		\begin{equation*}
		\mathcal{C}(\Gamma):=\frac{n}{\mathcal{S}(\Gamma)}
		\end{equation*}
		as a measure of connectivity of the graph. Likewise, one defines $\mathcal{C}_U(\Gamma)$ using $\mathcal{S}_U(\Gamma)$.
	\end{definition}
	Comparing Definition~\ref{MeasureConnectivity} with Definition~\ref{SocialGraphs} one concludes that $(\Gamma_k)_{k \in \mathbb{N}}$ forms a sequence of social graphs if and only if $\lim_{k \rightarrow \infty}\mathcal{C}(\Gamma_k)=\infty$. This supports the intuition of a social graphs as a highly connected graphs.

\section{On the connection of the surface area to other geometrical quantities of a graph}\label{Section4}
\subsection{Relation to the Cheeger constant}
In a first step we want to discuss the connection between the well-known Cheeger constant $h(\Gamma)$ of a graph $\Gamma$ and its surface area $\mathcal{S}(\Gamma)$. For this, we recall that 
\begin{equation}\label{CheegerConstant}
h(\Gamma):=\min_{X \subset V}\frac{E(X,V\setminus X)}{\min \{\mathrm{vol}(X),\mathrm{vol}(V\setminus X) \}}\ ,
\end{equation}
where $E(X,V\setminus X):=\sum_{i \in X, j \in V\setminus X}a_{ij}$ denotes the weighted number of edges connecting vertices in $X$ and its complement and $\mathrm{vol}(X):=\sum_{j \in X}\deg(j)$ \cite{Chung,KELLER201680}. Intuitively, the Cheeger constant of a connected graph is small whenever it is possible to cut the graph into two (equally large) parts without cutting through too many edges.

\begin{prop}\label{CheegerLarge} There exists a sequence of social graphs $(\Gamma_k)_{k \in \mathbb{N}}$ for which $h(\Gamma_k) \rightarrow 0$ as $k \rightarrow \infty$ and there exists another sequence of social graphs $(\hat{\Gamma}_k)_{k \in \mathbb{N}}$ for which $h(\hat{\Gamma}_k) \geq \varepsilon$ for some $\varepsilon > 0$ and all $k \in \mathbb{N}$.
\end{prop}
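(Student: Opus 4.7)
The strategy is purely constructive: I will exhibit two concrete families of graphs, one for each assertion, and verify both the social property and the desired behaviour of the Cheeger constant for each.

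For the family $(\hat{\Gamma}_k)_{k\in\NN}$ with $h(\hat{\Gamma}_k)\geq \varepsilon$, the natural candidate is the sequence of complete graphs $\hat{\Gamma}_k:=K_k$. By Example~\ref{ex:complete-graphs} we already know that $(K_k)_{k\in\NN}$ is social. For the Cheeger constant, I would use the fact that in $K_k$ every vertex has degree $k-1$, so for any $X\subset V$ with $|X|=m\leq k/2$ one has $\mathrm{vol}(X)=m(k-1)$ and $E(X,V\setminus X)=m(k-m)$; the quotient in~\eqref{CheegerConstant} therefore equals $(k-m)/(k-1)$, which is minimised at $m=\floor{k/2}$ and bounded below by $1/2$ for all $k\geq 2$. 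Thus $\varepsilon=1/2$ works.

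For the family $(\Gamma_k)_{k\in\NN}$ with $h(\Gamma_k)\to 0$, the natural idea is to glue two large complete graphs by a single edge, thereby creating an obvious bottleneck while keeping all degrees large. Concretely, I would let $\Gamma_k$ consist of two disjoint copies of $K_k$ together with one additional edge joining a distinguished vertex of the first copy to a distinguished vertex of the second copy, so that $n_k=2k$. Two of the vertices then have degree $k$ and the remaining $2(k-1)$ have degree $k-1$, which gives
\[
\mathcal{S}(\Gamma_k)=\frac{2(k-1)}{k-1}+\frac{2}{k}=2+\frac{2}{k}\ ,
\]
and hence $\mathcal{S}(\Gamma_k)/n_k=(2+2/k)/(2k)\to 0$, i.e.\ $(\Gamma_k)_{k\in\NN}$ is a sequence of social graphs. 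To bound the Cheeger constant, I would simply take $X$ to be the vertex set of one of the two copies of $K_k$: then $E(X,V\setminus X)=1$ (the bridge) while $\mathrm{vol}(X)=(k-1)^2+k=k^2-k+1$, and plugging into~\eqref{CheegerConstant} yields $h(\Gamma_k)\leq 1/(k^2-k+1)\to 0$.

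I do not anticipate a genuine obstacle: once the two candidate families are written down, both computations are completely elementary. The only point where one has to be slightly careful is verifying that the complete graphs do form a lower extremal case for the Cheeger ratio, but this reduces to showing that the function $m\mapsto (k-m)/(k-1)$ is decreasing on $\{1,\dots,\floor{k/2}\}$, which is immediate.
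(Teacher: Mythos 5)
Your proposal is correct and follows essentially the same route as the paper: complete graphs $K_k$ for the uniformly bounded Cheeger constant, and two copies of $K_k$ joined by a single bridge edge (with the same test set $X$ and the same bound $1/((k-1)^2+k)$) for the vanishing case. The only cosmetic difference is that you compute $h(K_k)\geq 1/2$ directly from the definition, while the paper deduces the lower bound from Cheeger's inequality and the known second eigenvalue $\frac{k}{k-1}$ of the normalized Laplacian on $K_k$; both are fine, and your explicit verification that the bridged graphs are social is a welcome detail the paper leaves implicit.
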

\begin{proof} To prove the second statement, we set $\hat{\Gamma}_n:=K_n$ where $K_n$ is the unweighted $n$-complete graph on $n$ vertices as in Example \ref{ex:complete-graphs} and recall that $h(K_n)$ is well-known to be bounded away from zero: for instance, one may observe this using Cheeger's inequality (also compare with the proof of Theorem~\ref{TheoremUpperBound}) and the fact that the second eigenvalue of the normalized Laplacian associated to $K_n$ is given by $\frac{n}{n-1} \geq 1$ (with multiplicity $n-1$), cf.\ \cite[Example~1.1 and Lemma~2.1]{Chung}. 

	With respect to the first statement, we choose a sequence $(\Gamma_n)_{n \in \mathbb{N}}$ with $\Gamma_n$ to be the (unweighted) graph obtained by connecting two disjoint copies of $K_n$ by one fixed edge. 
 \begin{center}
 \vspace{-0.2cm}
 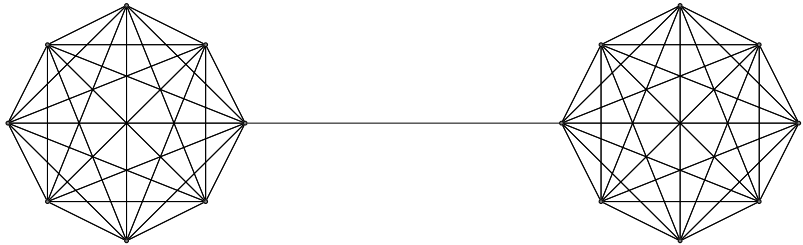
\begin{figure}[h]

\begin{tikzpicture}[scale=0.45]
      \tikzset{enclosed/.style={draw, circle, inner sep=0pt, minimum size=.05cm, fill=gray}, every loop/.style={}}

      \node[enclosed] (A) at (-4,4) {};
      \node[enclosed] (B) at (0,4) {};
      \node[enclosed] (C) at (0,0) {};
      \node[enclosed] (D) at (-4,0) {};
       \node[enclosed] (E) at (-2,5) {};
      \node[enclosed] (F) at (1,2) {};
      \node[enclosed] (G) at (-2,-1) {};
      \node[enclosed] (H) at (-5,2) {};

      \node[enclosed] (A') at (10,4) {};
      \node[enclosed] (B') at (14,4) {};
      \node[enclosed] (C') at (14,0) {};
      \node[enclosed] (D') at (10,0) {};
       \node[enclosed] (E') at (12,5) {};
      \node[enclosed] (F') at (15,2) {};
      \node[enclosed] (G') at (12,-1) {};
      \node[enclosed] (H') at (9,2) {};
      
      \draw[line width=0.1pt] (F) edge node[above] {} (H') node[midway, above] (edge1) {};
      \draw[line width=0.1pt] (A) edge node[above] {} (B) node[midway, above] (edge1) {};
      \draw[line width=0.1pt] (A) edge node[above] {} (C) node[midway, above] (edge2) {};
      \draw[line width=0.1pt] (A) edge node[above] {} (D) node[midway, above] (edge3) {};
      \draw[line width=0.1pt] (A) edge node[above] {} (E) node[midway, above] (edge4) {};
      \draw[line width=0.1pt] (A) edge node[above] {} (F) node[midway, above] (edge1) {};
      \draw[line width=0.1pt] (A) edge node[above] {} (G) node[midway, above] (edge2) {};
      \draw[line width=0.1pt] (A) edge node[above] {} (H) node[midway, above] (edge3) {};
      \draw[line width=0.1pt] (B) edge node[above] {} (A) node[midway, above] (edge1) {};
      \draw[line width=0.1pt] (B) edge node[above] {} (C) node[midway, above] (edge2) {};
      \draw[line width=0.1pt] (B) edge node[above] {} (D) node[midway, above] (edge3) {};
      \draw[line width=0.1pt] (B) edge node[above] {} (E) node[midway, above] (edge4) {};
      \draw[line width=0.1pt] (B) edge node[above] {} (F) node[midway, above] (edge1) {};
      \draw[line width=0.1pt] (B) edge node[above] {} (G) node[midway, above] (edge2) {};
      \draw[line width=0.1pt] (B) edge node[above] {} (H) node[midway, above] (edge3) {};
      \draw[line width=0.1pt] (C) edge node[above] {} (A) node[midway, above] (edge1) {};
      \draw[line width=0.1pt] (C) edge node[above] {} (B) node[midway, above] (edge2) {};
      \draw[line width=0.1pt] (C) edge node[above] {} (D) node[midway, above] (edge3) {};
      \draw[line width=0.1pt] (C) edge node[above] {} (E) node[midway, above] (edge4) {};
      \draw[line width=0.1pt] (C) edge node[above] {} (F) node[midway, above] (edge1) {};
      \draw[line width=0.1pt] (C) edge node[above] {} (G) node[midway, above] (edge2) {};
      \draw[line width=0.1pt] (C) edge node[above] {} (H) node[midway, above] (edge3) {};
      \draw[line width=0.1pt] (D) edge node[above] {} (A) node[midway, above] (edge1) {};
      \draw[line width=0.1pt] (D) edge node[above] {} (B) node[midway, above] (edge2) {};
      \draw[line width=0.1pt] (D) edge node[above] {} (C) node[midway, above] (edge3) {};
      \draw[line width=0.1pt] (D) edge node[above] {} (E) node[midway, above] (edge4) {};
      \draw[line width=0.1pt] (D) edge node[above] {} (F) node[midway, above] (edge1) {};
      \draw[line width=0.1pt] (D) edge node[above] {} (G) node[midway, above] (edge2) {};
      \draw[line width=0.1pt] (D) edge node[above] {} (H) node[midway, above] (edge3) {};
      \draw[line width=0.1pt] (E) edge node[above] {} (A) node[midway, above] (edge1) {};
      \draw[line width=0.1pt] (E) edge node[above] {} (B) node[midway, above] (edge2) {};
      \draw[line width=0.1pt] (E) edge node[above] {} (C) node[midway, above] (edge3) {};
      \draw[line width=0.1pt] (E) edge node[above] {} (D) node[midway, above] (edge4) {};
      \draw[line width=0.1pt] (E) edge node[above] {} (F) node[midway, above] (edge1) {};
      \draw[line width=0.1pt] (E) edge node[above] {} (G) node[midway, above] (edge2) {};
      \draw[line width=0.1pt] (E) edge node[above] {} (H) node[midway, above] (edge3) {};
      \draw[line width=0.1pt] (F) edge node[above] {} (A) node[midway, above] (edge1) {};
      \draw[line width=0.1pt] (F) edge node[above] {} (B) node[midway, above] (edge2) {};
      \draw[line width=0.1pt] (F) edge node[above] {} (C) node[midway, above] (edge3) {};
      \draw[line width=0.1pt] (F) edge node[above] {} (D) node[midway, above] (edge4) {};
      \draw[line width=0.1pt] (F) edge node[above] {} (E) node[midway, above] (edge1) {};
      \draw[line width=0.1pt] (F) edge node[above] {} (G) node[midway, above] (edge2) {};
      \draw[line width=0.1pt] (F) edge node[above] {} (H) node[midway, above] (edge3) {};
      \draw[line width=0.1pt] (G) edge node[above] {} (A) node[midway, above] (edge1) {};
      \draw[line width=0.1pt] (G) edge node[above] {} (B) node[midway, above] (edge2) {};
      \draw[line width=0.1pt] (G) edge node[above] {} (C) node[midway, above] (edge3) {};
      \draw[line width=0.1pt] (G) edge node[above] {} (D) node[midway, above] (edge4) {};
      \draw[line width=0.1pt] (G) edge node[above] {} (E) node[midway, above] (edge1) {};
      \draw[line width=0.1pt] (G) edge node[above] {} (F) node[midway, above] (edge2) {};
      \draw[line width=0.1pt] (G) edge node[above] {} (H) node[midway, above] (edge3) {};
      \draw[line width=0.1pt] (H) edge node[above] {} (A) node[midway, above] (edge1) {};
      \draw[line width=0.1pt] (H) edge node[above] {} (B) node[midway, above] (edge2) {};
      \draw[line width=0.1pt] (H) edge node[above] {} (C) node[midway, above] (edge3) {};
      \draw[line width=0.1pt] (H) edge node[above] {} (D) node[midway, above] (edge4) {};
      \draw[line width=0.1pt] (H) edge node[above] {} (E) node[midway, above] (edge1) {};
      \draw[line width=0.1pt] (H) edge node[above] {} (F) node[midway, above] (edge2) {};
      \draw[line width=0.1pt] (H) edge node[above] {} (G) node[midway, above] (edge3) {};

      \draw[line width=0.1pt] (A') edge node[above] {} (B') node[midway, above] (edge1) {};
      \draw[line width=0.1pt] (A') edge node[above] {} (C') node[midway, above] (edge2) {};
      \draw[line width=0.1pt] (A') edge node[above] {} (D') node[midway, above] (edge3) {};
      \draw[line width=0.1pt] (A') edge node[above] {} (E') node[midway, above] (edge4) {};
      \draw[line width=0.1pt] (A') edge node[above] {} (F') node[midway, above] (edge1) {};
      \draw[line width=0.1pt] (A') edge node[above] {} (G') node[midway, above] (edge2) {};
      \draw[line width=0.1pt] (A') edge node[above] {} (H') node[midway, above] (edge3) {};
      \draw[line width=0.1pt] (B') edge node[above] {} (A') node[midway, above] (edge1) {};
      \draw[line width=0.1pt] (B') edge node[above] {} (C') node[midway, above] (edge2) {};
      \draw[line width=0.1pt] (B') edge node[above] {} (D') node[midway, above] (edge3) {};
      \draw[line width=0.1pt] (B') edge node[above] {} (E') node[midway, above] (edge4) {};
      \draw[line width=0.1pt] (B') edge node[above] {} (F') node[midway, above] (edge1) {};
      \draw[line width=0.1pt] (B') edge node[above] {} (G') node[midway, above] (edge2) {};
      \draw[line width=0.1pt] (B') edge node[above] {} (H') node[midway, above] (edge3) {};
      \draw[line width=0.1pt] (C') edge node[above] {} (A') node[midway, above] (edge1) {};
      \draw[line width=0.1pt] (C') edge node[above] {} (B') node[midway, above] (edge2) {};
      \draw[line width=0.1pt] (C') edge node[above] {} (D') node[midway, above] (edge3) {};
      \draw[line width=0.1pt] (C') edge node[above] {} (E') node[midway, above] (edge4) {};
      \draw[line width=0.1pt] (C') edge node[above] {} (F') node[midway, above] (edge1) {};
      \draw[line width=0.1pt] (C') edge node[above] {} (G') node[midway, above] (edge2) {};
      \draw[line width=0.1pt] (C') edge node[above] {} (H') node[midway, above] (edge3) {};
      \draw[line width=0.1pt] (D') edge node[above] {} (A') node[midway, above] (edge1) {};
      \draw[line width=0.1pt] (D') edge node[above] {} (B') node[midway, above] (edge2) {};
      \draw[line width=0.1pt] (D') edge node[above] {} (C') node[midway, above] (edge3) {};
      \draw[line width=0.1pt] (D') edge node[above] {} (E') node[midway, above] (edge4) {};
      \draw[line width=0.1pt] (D') edge node[above] {} (F') node[midway, above] (edge1) {};
      \draw[line width=0.1pt] (D') edge node[above] {} (G') node[midway, above] (edge2) {};
      \draw[line width=0.1pt] (D') edge node[above] {} (H') node[midway, above] (edge3) {};
      \draw[line width=0.1pt] (E') edge node[above] {} (A') node[midway, above] (edge1) {};
      \draw[line width=0.1pt] (E') edge node[above] {} (B') node[midway, above] (edge2) {};
      \draw[line width=0.1pt] (E') edge node[above] {} (C') node[midway, above] (edge3) {};
      \draw[line width=0.1pt] (E') edge node[above] {} (D') node[midway, above] (edge4) {};
      \draw[line width=0.1pt] (E') edge node[above] {} (F') node[midway, above] (edge1) {};
      \draw[line width=0.1pt] (E') edge node[above] {} (G') node[midway, above] (edge2) {};
      \draw[line width=0.1pt] (E') edge node[above] {} (H') node[midway, above] (edge3) {};
      \draw[line width=0.1pt] (F') edge node[above] {} (A') node[midway, above] (edge1) {};
      \draw[line width=0.1pt] (F') edge node[above] {} (B') node[midway, above] (edge2) {};
      \draw[line width=0.1pt] (F') edge node[above] {} (C') node[midway, above] (edge3) {};
      \draw[line width=0.1pt] (F') edge node[above] {} (D') node[midway, above] (edge4) {};
      \draw[line width=0.1pt] (F') edge node[above] {} (E') node[midway, above] (edge1) {};
      \draw[line width=0.1pt] (F') edge node[above] {} (G') node[midway, above] (edge2) {};
      \draw[line width=0.1pt] (F') edge node[above] {} (H') node[midway, above] (edge3) {};
      \draw[line width=0.1pt] (G') edge node[above] {} (A') node[midway, above] (edge1) {};
      \draw[line width=0.1pt] (G') edge node[above] {} (B') node[midway, above] (edge2) {};
      \draw[line width=0.1pt] (G') edge node[above] {} (C') node[midway, above] (edge3) {};
      \draw[line width=0.1pt] (G') edge node[above] {} (D') node[midway, above] (edge4) {};
      \draw[line width=0.1pt] (G') edge node[above] {} (E') node[midway, above] (edge1) {};
      \draw[line width=0.1pt] (G') edge node[above] {} (F') node[midway, above] (edge2) {};
      \draw[line width=0.1pt] (G') edge node[above] {} (H') node[midway, above] (edge3) {};
      \draw[line width=0.1pt] (H') edge node[above] {} (A') node[midway, above] (edge1) {};
      \draw[line width=0.1pt] (H') edge node[above] {} (B') node[midway, above] (edge2) {};
      \draw[line width=0.1pt] (H') edge node[above] {} (C') node[midway, above] (edge3) {};
      \draw[line width=0.1pt] (H') edge node[above] {} (D') node[midway, above] (edge4) {};
      \draw[line width=0.1pt] (H') edge node[above] {} (E') node[midway, above] (edge1) {};
      \draw[line width=0.1pt] (H') edge node[above] {} (F') node[midway, above] (edge2) {};
      \draw[line width=0.1pt] (H') edge node[above] {} (G') node[midway, above] (edge3) {};
     \end{tikzpicture}
     \caption{Two copies of $K_8$ connected by a single edge.}
     \end{figure}
\end{center}
\vspace{-0.7cm}
 Putting $X_n$ as the set of vertices of the first copy of $K_n$ such that $V_n \setminus X_n$ consists of all vertices of the second copy of $K_n$, we observe that for every $n \in \mathbb{N}$ one has that $E(X_n,V_n \setminus X_n) = 1$ as well as
 \[
 \mathrm{vol}(X_n) = \mathrm{vol}(V_n \setminus X_n) = (n-1)^2 + n \ .
 \]
 Thus, according to the definition of the Cheeger constant, it follows that
	\begin{equation*}
	h(\Gamma_n) \leq \frac{1}{(n-1)^2+n} \rightarrow 0 \quad \text{as} \:\: n \rightarrow \infty \ ,
	\end{equation*}
	from which the statement follows.
\end{proof}
\begin{remark} In view of Proposition~\ref{CheegerLarge} it is instructive to mention that, for a sequence of so-called expander graphs $(\Gamma_n)_{n \in \mathbb{N}}$, by definition one always has $h(\Gamma_n) \geq \varepsilon$ for some $\varepsilon > 0$ but also $\mathcal{S}(\Gamma_n) \rightarrow \infty$ since the vertex degrees are, also by definition, uniformly bounded. 
\end{remark}

\subsection{Surgery of graphs and the surface area}\label{AppendixGeometric}
In this section we investigate the impact of basic surgery methods such as gluing and cutting at vertices resp.\:edges on the surface area $\mathcal{S}(\Gamma)$. Note that such surgery methods have been thoroughly studied in the context of metric graphs, cf.\ for instance \cite{BKKMSurgery}. As we will see, from a geometrical point of view, this support the interpretation of the inverse degree as a surface area.
\begin{lemma}[Gluing of two graphs together at a vertex]\label{LemmaGluing}
Let $\Gamma=\Gamma(V_1,E_1)$ and $\Gamma'=\Gamma'(V_2,E_2)$ be two graphs with $i_1 \in V_1$ as well as $i_2 \in V_2$. Furthermore, let $\Gamma \cup \Gamma'$ denote the graph which arises by gluing the graphs $\Gamma$ and $\Gamma'$ together at the vertices $i_1$ and $i_2$ (see Figure \ref{fig:gluingvertices}). Then,
\[
\mathcal{S}(\Gamma \cup \Gamma') \leq \mathcal{S}(\Gamma) + \mathcal{S}(\Gamma') \ .
\]
\end{lemma}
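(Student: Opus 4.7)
The plan is to compare the two sides vertex by vertex. After gluing $\Gamma$ and $\Gamma'$ at $i_1$ and $i_2$, the resulting graph $\Gamma \cup \Gamma'$ has vertex set $(V_1 \setminus \{i_1\}) \cup (V_2 \setminus \{i_2\}) \cup \{i^{\ast}\}$, where $i^{\ast}$ denotes the identified vertex. The key geometric observation is that every edge incident to $i_1$ in $\Gamma$ and every edge incident to $i_2$ in $\Gamma'$ becomes an edge incident to $i^{\ast}$, and these two collections are disjoint; therefore
\[
\deg_{\Gamma \cup \Gamma'}(i^{\ast}) \;=\; \deg_{\Gamma}(i_1) + \deg_{\Gamma'}(i_2),
\]
while all remaining vertices keep their original degrees.

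Using this, I would split the sum in the definition of $\mathcal{S}(\Gamma \cup \Gamma')$ into the contributions from $V_1 \setminus \{i_1\}$, $V_2 \setminus \{i_2\}$ and the merged vertex $i^{\ast}$. Subtracting this from $\mathcal{S}(\Gamma) + \mathcal{S}(\Gamma')$, the contributions of all untouched vertices cancel exactly, and one is left with
\[
\mathcal{S}(\Gamma) + \mathcal{S}(\Gamma') - \mathcal{S}(\Gamma \cup \Gamma') \;=\; \frac{1}{\deg_{\Gamma}(i_1)} + \frac{1}{\deg_{\Gamma'}(i_2)} - \frac{1}{\deg_{\Gamma}(i_1) + \deg_{\Gamma'}(i_2)}.
\]

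The desired inequality then reduces to the elementary fact that $\tfrac{1}{a} + \tfrac{1}{b} \geq \tfrac{1}{a+b}$ for $a,b > 0$, which is immediate since each of $\tfrac{1}{a}$ and $\tfrac{1}{b}$ already dominates $\tfrac{1}{a+b}$. Since connectedness of $\Gamma$ and $\Gamma'$ guarantees $\deg_{\Gamma}(i_1), \deg_{\Gamma'}(i_2) > 0$, the conclusion follows.

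There is no real obstacle: the only points worth double-checking are the bookkeeping of which edges persist and which vertices are identified (so that the claimed degree formula at $i^{\ast}$ is correct), and the positivity of the two degrees, which is used when inverting. Everything else is a one-line inequality.
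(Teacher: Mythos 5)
Your proposal is correct and follows essentially the same route as the paper: both arguments observe that only the identified vertex changes degree, that its new degree is the sum $\deg_{\Gamma}(i_1)+\deg_{\Gamma'}(i_2)$, and then apply the elementary inequality $\tfrac{1}{a+b}\leq\tfrac{1}{a}+\tfrac{1}{b}$ for $a,b>0$. No gaps.
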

\begin{figure}[h]
\begin{tikzpicture}[scale=0.45]
      \tikzset{enclosed/.style={draw, circle, inner sep=0pt, minimum size=.1cm, fill=gray}}

      \node[enclosed, label={}] (A) at (0,5.5) {};
      \node[enclosed, label={}] (C) at (2,4) {};
      \node[enclosed, label={}] (B) at (0,2.5) {};
      \node[enclosed, label={$i_1$}] (D) at (4,4) {};
      \node[enclosed, label={below: $\Gamma$},white] (W1) at (2,2.5) {};

      \node[enclosed, label={$i_2$}] (A') at (6,4) {};
      \node[enclosed, label={}] (B') at (8,4) {};
      \node[enclosed, label={}] (C') at (10,5.5) {};
      \node[enclosed, label={}] (D') at (10,4) {};
      \node[enclosed, label={}] (E') at (10,2.5) {};
      \node[enclosed, label={below: $\Gamma'$},white] (W1') at (8,2.5) {};

      \node[enclosed, label={},white] (W2) at (11,4) {};
      \node[enclosed, label={},white] (W3) at (13,4) {};

      \node[enclosed, label={}] (A'') at (14,5.5) {};
      \node[enclosed, label={}] (C'') at (16,4) {};
      \node[enclosed, label={}] (B'') at (14,2.5) {};
      \node[enclosed, label={$i_1 \simeq i_2$}] (D'') at (18,4) {};      
      \node[enclosed, label={}] (B''') at (20,4) {};
      \node[enclosed, label={}] (C''') at (22,5.5) {};
      \node[enclosed, label={}] (D''') at (22,4) {};
      \node[enclosed, label={}] (E''') at (22,2.5) {};
    \node[enclosed, label={below: $\Gamma \cup \Gamma'$},white] (W1'') at (18,2.5) {};

      \draw (A) edge node[below] {} (C) node[midway, above] (edge1) {};
      \draw (B) edge node[below] {} (C) node[midway, above] (edge2) {};
      \draw (C) edge node[below] {} (D) node[midway, above] (edge3) {};

      \draw (A') edge node[below] {} (B') node[midway, above] (edge1') {};
      \draw (B') edge node[below] {} (C') node[midway, above] (edge2') {};
      \draw (B') edge node[below] {} (D') node[midway, above] (edge3') {};
      \draw (B') edge node[below] {} (E') node[midway, above] (edge4') {};

      \draw[ultra thick, ->] (W2) edge node[below] {} (W3) node[midway, above] (arrow) {};

      \draw (A'') edge node[below] {} (C'') node[midway, above] (edge1'') {};
      \draw (B'') edge node[below] {} (C'') node[midway, above] (edge2'') {};
      \draw (C'') edge node[below] {} (D'') node[midway, above] (edge3'') {};
      \draw (D'') edge node[below] {} (B''') node[midway, above] (edge4'') {};
      \draw (B''') edge node[below] {} (C''') node[midway, above] (edge5'') {};
      \draw (B''') edge node[below] {} (D''') node[midway, above] (edge6'') {};
      \draw (B''') edge node[below] {} (E''') node[midway, above] (edge7'') {};

     \end{tikzpicture}
     \vspace{-0.3cm}
     \caption{Gluing the graphs $\Gamma$ and $\Gamma'$ together at $i_1$ and $i_2$}\label{fig:gluingvertices}
\end{figure}
\begin{proof}
Let us identify $V_1=\{1,\dots,|V_1|\}$ and $V_2:=\{|V_1|,\dots,|V_1|+|V_2|-1\}$ such that $i_1=|V_1|=i_2$. The vertex set of $V_{\Gamma \cup \Gamma'}$ can then be written as $\{1,\dots,|V_1|+|V_2|-1\}$. For all vertices $i \in V_{\Gamma \cup \Gamma'}$ with $i\neq |V_1|$, the degree is not affected by gluing. Furthermore, for $i=|V_1|$ and using an obvious notation, one has
\[
\deg(i \in V_{\Gamma \cup \Gamma'}) = \deg(i \in V_{\Gamma}) + \deg(i \in V_{\Gamma'})
\]
and hence we obtain 
\begin{align*}
\frac{1}{\deg(i \in V_{\Gamma \cup \Gamma'})} &= \frac{1}{\deg(i \in V_{\Gamma}) + \deg(i \in V_{\Gamma'})} \\&\leq \frac{1}{\deg(i \in V_{\Gamma}) + \deg(i \in V_{\Gamma'})} + \frac{1}{\deg(i \in V_{\Gamma'})} \\&\leq \frac{1}{\deg(i \in V_{\Gamma})} + \frac{1}{\deg(i \in V_{\Gamma'})}\ .
\end{align*}
Since all other degrees remain unchanged, this gives $\mathcal{S}(\Gamma \cup \Gamma') \leq \mathcal{S}(\Gamma') + \mathcal{S}(\Gamma)$.
\end{proof}
In agreement with standard geometrical intuition that stems from domains in $\mathbb{R}^2$, \linebreak Lemma~\ref{LemmaGluing} shows that the gluing of two graphs leads to a decreasing surface area. In the same vein, one could now also ask whether separating a graph into two smaller graphs also follows the expected lines; this is indeed the case as demonstrated by the following statement.
\begin{lemma}[Cutting a graph]
Let $\Gamma = \Gamma(V,E)$ be some connected graph with $e \in E$ and such that the graph $\Gamma'=\Gamma'(V,E')$ with $E' := E \setminus \{ e \}$ is disconnected and the union of two connected graphs $\Gamma_1 = (E_1,V_1)$ and $\Gamma_2 = (E_2,V_2)$ (cf.~Figure \ref{fig:cuttingedges}). Then
\[
\mathcal{S}(\Gamma) \leq \mathcal{S}(\Gamma_1) + \mathcal{S}(\Gamma_2) \ .
\]
\end{lemma}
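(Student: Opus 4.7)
The plan is to imitate the local/vertex-by-vertex bookkeeping used in the proof of Lemma~\ref{LemmaGluing}, exploiting that cutting a single edge only changes the degrees of its two endpoints.

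First I would fix notation: write $e=\{w_1,w_2\}$ with $w_1\in V_1$ and $w_2\in V_2$. Because $\Gamma'$ is the disjoint union of $\Gamma_1$ and $\Gamma_2$, the edge sets satisfy $E=E_1\cup E_2\cup\{e\}$ (disjoint union), and $V=V_1\cup V_2$ (disjoint union). For every vertex $v\in V\setminus\{w_1,w_2\}$ the edges incident to $v$ in $\Gamma$ are exactly the edges incident to $v$ in the component it belongs to, so $\deg_\Gamma(v)=\deg_{\Gamma_i}(v)$, where $i\in\{1,2\}$ is determined by $v\in V_i$. These vertices therefore contribute identically to $\mathcal{S}(\Gamma)$ and to $\mathcal{S}(\Gamma_1)+\mathcal{S}(\Gamma_2)$.

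The only work is at the two endpoints of the cut edge. There one has
\[
\deg_\Gamma(w_1)=\deg_{\Gamma_1}(w_1)+a_{w_1w_2},\qquad \deg_\Gamma(w_2)=\deg_{\Gamma_2}(w_2)+a_{w_1w_2},
\]
with $a_{w_1w_2}>0$ since $e\in E$. Because $x\mapsto 1/x$ is strictly decreasing on $(0,\infty)$, this yields
\[
\frac{1}{\deg_\Gamma(w_1)}\le\frac{1}{\deg_{\Gamma_1}(w_1)},\qquad \frac{1}{\deg_\Gamma(w_2)}\le\frac{1}{\deg_{\Gamma_2}(w_2)}.
\]
Summing the identity for the other vertices and these two inequalities gives the claimed estimate $\mathcal{S}(\Gamma)\le\mathcal{S}(\Gamma_1)+\mathcal{S}(\Gamma_2)$ (in fact strictly, provided both endpoints $w_i$ retain an incident edge in $\Gamma_i$).

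There is essentially no conceptual obstacle; the only point deserving attention is the tacit nondegeneracy assumption that $\mathcal{S}(\Gamma_i)$ is defined, i.e.\ that every vertex of $\Gamma_i$ has positive degree. If $\Gamma_i$ happens to consist of the single vertex $w_i$, then $\deg_{\Gamma_i}(w_i)=0$ and $\mathcal{S}(\Gamma_i)=+\infty$ by convention, in which case the inequality holds trivially; otherwise $\Gamma_i$ is connected with at least one edge and all inverse degrees appearing above are finite, so the argument goes through as stated.
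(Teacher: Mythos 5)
Your proof is correct and follows essentially the same route as the paper's (one-line) argument: deleting the edge $e$ only lowers the degrees of its two endpoints, so each summand $1/\deg(\cdot)$ can only increase, and all other vertices contribute identically to both sides. Your additional remark on the degenerate case where some $\Gamma_i$ is a single isolated vertex is a reasonable precaution that the paper glosses over, but it does not change the substance of the argument.
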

\begin{figure}[h]
\begin{tikzpicture}[scale=0.50]
      \tikzset{enclosed/.style={draw, circle, inner sep=0pt, minimum size=.1cm, fill=gray}}

      \node[enclosed, label={}] (A) at (0,5.5) {};
      \node[enclosed, label={}] (C) at (2,4) {};
      \node[enclosed, label={}] (B) at (0,2.5) {};
      \node[enclosed, label={}] (D) at (5,4) {};
      \node[enclosed, label={below: $\Gamma$},white] (W1) at (3.5,2.5) {};
      \node[enclosed, label={}] (C') at (7,5.5) {};
      \node[enclosed, label={}] (D') at (7,4) {};
      \node[enclosed, label={}] (E') at (7,2.5) {};

      \node[enclosed, label={},white] (W2) at (8,4) {};
      \node[enclosed, label={},white] (W3) at (10,4) {};

      \node[enclosed, label={}] (A'') at (11,5.5) {};
      \node[enclosed, label={}] (C'') at (13,4) {};
      \node[enclosed, label={}] (B'') at (11,2.5) {};
      
      \node[enclosed, label={}] (B''') at (16,4) {};
      \node[enclosed, label={}] (C''') at (18,5.5) {};
      \node[enclosed, label={}] (D''') at (18,4) {};
      \node[enclosed, label={}] (E''') at (18,2.5) {};
        \node[enclosed, label={below: $\Gamma_1$},white] (W1'') at (12,2.5) {};
        \node[enclosed, label={below: $\Gamma_2$},white] (W2'') at (17,2.5) {};

      \draw (A) edge node[below] {} (C) node[midway, above] (edge1) {};
      \draw (B) edge node[below] {} (C) node[midway, above] (edge2) {};
      \draw[dashed] (C) edge node[below] {$e$} (D) node[midway, above] (edge3) {};

      \draw (D) edge node[below] {} (C') node[midway, above] (edge2') {};
      \draw (D) edge node[below] {} (D') node[midway, above] (edge3') {};
      \draw (D) edge node[below] {} (E') node[midway, above] (edge4') {};

      \draw[ultra thick, ->] (W2) edge node[below] {} (W3) node[midway, above] (arrow) {};

      \draw (A'') edge node[below] {} (C'') node[midway, above] (edge1'') {};
      \draw (B'') edge node[below] {} (C'') node[midway, above] (edge2'') {};
      \draw (B''') edge node[below] {} (C''') node[midway, above] (edge5'') {};
      \draw (B''') edge node[below] {} (D''') node[midway, above] (edge6'') {};
      \draw (B''') edge node[below] {} (E''') node[midway, above] (edge7'') {};
     \end{tikzpicture}
     \vspace{-0.3cm}
     \caption{Cutting the graph at the edge $e$ into disconnected graphs $\Gamma_1$ and $\Gamma_2$.}\label{fig:cuttingedges}
\end{figure}
\begin{proof} This statement is a simple consequence of the fact that, for any graph $\Gamma$, the deletion of an edge reduces the degree of the associated vertices and hence the surface are increases.
\end{proof}
Also, adding a pending edge to some graph increases the surface area as shown in the next statement.
\begin{lemma}[Attaching pending edges]\label{lem:attachingpendingedges}\label{LemmaAddingPendingEdge}
Let $\Gamma = \Gamma(V,E)$ be a un unweighted graph and assume that $\Gamma' = \Gamma'(V',E')$ is unweighted and obtained by attaching an additional edge $e' \in E' \setminus E$ together with a (boundary) vertex $i' \in V' \setminus V$ (of degree one) to some $i \in V$ (as in Figure \ref{fig:pendingedges}); in particular, we have that $V'=V \,\dot{\cup}\, \{i'\}$ and $E' = E \,\dot{\cup}\, \{e'\}$. Then 
$$\mathcal{S}(\Gamma) \leq \mathcal{S}(\Gamma')\ .$$
\end{lemma}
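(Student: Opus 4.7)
The plan is to observe that attaching a pending edge is a very local operation: only the degrees at the anchor vertex $i \in V$ and at the newly added leaf vertex $i' \in V' \setminus V$ are affected, while every other vertex retains its degree. Writing $d := \deg_{\Gamma}(i) \geq 1$ (which is positive since $\Gamma$ is assumed connected and contains $i$), one has $\deg_{\Gamma'}(i) = d+1$ and $\deg_{\Gamma'}(i') = 1$. All remaining summands in the defining sum \eqref{Circ} for $\mathcal{S}(\Gamma)$ and $\mathcal{S}(\Gamma')$ agree, so the entire problem reduces to comparing the two altered terms.

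Concretely, I would compute
\[
\mathcal{S}(\Gamma') - \mathcal{S}(\Gamma) = \left(\frac{1}{d+1} + \frac{1}{1}\right) - \frac{1}{d} = 1 - \frac{1}{d(d+1)},
\]
and then note that for any integer $d \geq 1$ one has $d(d+1) \geq 2$, hence $1 - \frac{1}{d(d+1)} \geq \frac{1}{2} > 0$. This immediately gives $\mathcal{S}(\Gamma') \geq \mathcal{S}(\Gamma)$, in fact with a strict inequality and a quantitative lower bound on the gain.

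There is essentially no main obstacle here; the only subtle point is to record explicitly that in the unweighted setting the assumption that $\Gamma$ be a connected graph (which was the standing assumption in Section~\ref{SectionSetting}) ensures $d \geq 1$, so that the quantity $1/d$ appearing in $\mathcal{S}(\Gamma)$ is well defined. If one wished to allow the degenerate case where $\Gamma$ consists of the single vertex $i$, one would simply treat this separately by observing that $\mathcal{S}(\Gamma') = 1 + 1 = 2$ while $\mathcal{S}(\Gamma) = \infty$ or is undefined, so the inequality is either vacuous or requires a mild convention. In the standard setting of the paper this case does not arise, and the above two-line computation constitutes the proof.
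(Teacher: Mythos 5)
Your proof is correct and follows essentially the same route as the paper's: both reduce to comparing the only two altered summands, $\tfrac{1}{\deg_{\Gamma}(i)}$ versus $\tfrac{1}{\deg_{\Gamma'}(i)}+\tfrac{1}{\deg_{\Gamma'}(i')}$, via an elementary inequality using $\deg_{\Gamma}(i)\in\mathbb{N}$. Your direct computation of the difference is in fact a little cleaner than the paper's chain of inequalities and yields the additional quantitative information $\mathcal{S}(\Gamma')-\mathcal{S}(\Gamma)\geq \tfrac{1}{2}$.
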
 
\begin{figure}[h]
\begin{tikzpicture}[scale=0.4]
      \tikzset{enclosed/.style={draw, circle, inner sep=0pt, minimum size=.1cm, fill=gray}}

      \node[enclosed, label={}] (C) at (4,6) {};
      \node[enclosed, label={$i$}] (A) at (8,4) {};
      \node[enclosed, label={}] (B) at (0,4) {};
      \node[enclosed, label={}] (D) at (4,2) {};
      \node[enclosed, label={$i'$}] (P) at (11,4) {};

      \draw (A) edge node[below] {} (C) node[midway, above] (edge1) {};
      \draw (B) edge node[below] {} (C) node[midway, above] (edge2) {};
      \draw (B) edge node[below] {} (D) node[midway, above] (edge3) {};
      \draw (A) edge node[below] {} (D) node[midway, above] (edge4) {};
      \draw[dashed] (A) edge node[below] {$e'$} (P) node[midway, above] (edge5) {};
     \end{tikzpicture}
     \vspace{-0.3cm}
     \caption{A \emph{diamond graph} $\Gamma$ with a pendant edge $e' = (i,i')$ in $v$.}\label{fig:pendingedges}
\end{figure}
\begin{proof}
Clearly, for every $j \neq i$ we have that $\deg_{\Gamma}(j)=\deg_{\Gamma'}(j)$. Thus, we only have to consider the vertex $i \in V$ itself. As $\deg_{\Gamma}(i), \deg_{\Gamma'}(i) \in \mathbb{N}$ and by construction $\deg_{\Gamma'}(i) = \deg_{\Gamma}(i)+1$, we observe
\[
\frac{\deg_{\Gamma'}(i)}{\deg_{\Gamma}(i)} = \frac{\deg_{\Gamma}(i)+1}{\deg_{\Gamma}(i)} = 1 + \frac{1}{\deg_{\Gamma}(i)} \leq 1+\deg_{\Gamma'}(i)
\]
which implies by dividing $\deg_{\Gamma'}(i)$ on both sides that
\[
\frac{1}{\deg_{\Gamma}(i)} \leq \frac{1}{\deg_{\Gamma'}(i)} + 1 =  \frac{1}{\deg_{\Gamma'}(i)} + \frac{1}{\deg_{\Gamma'}(i')}.
\]
This yields $\mathcal{S}(\Gamma) \leq \mathcal{S}(\Gamma')$.
\end{proof}
Using Lemma~\ref{LemmaAddingPendingEdge} recursively, the following statement follows readily.
\begin{proposition} Let $\Gamma=\Gamma(V,E)$ be a connected, unweighted \emph{graph} and $\Gamma' = \Gamma(V',E')$ an unweighted graph that is constructed from $\Gamma$  by recursively adding pending edges as in Lemma~\ref{LemmaAddingPendingEdge}. Then 
$$\mathcal{S}(\Gamma) \leq \mathcal{S}(\Gamma')\ .$$
\end{proposition}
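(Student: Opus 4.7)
The plan is to prove the proposition by straightforward induction on the number $k \in \NN$ of pending edges that are added to $\Gamma$ in order to obtain $\Gamma'$. The construction described in the statement implicitly yields a finite sequence of unweighted graphs
\[
\Gamma = \Gamma_0, \Gamma_1, \ldots, \Gamma_k = \Gamma',
\]
where each $\Gamma_{j+1}$ arises from $\Gamma_j$ by attaching a single pending edge at some vertex of $\Gamma_j$, exactly in the sense of Lemma~\ref{LemmaAddingPendingEdge}.

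For the base case $k=0$ there is nothing to show, as $\Gamma = \Gamma'$. For the inductive step, assume that $\mathcal{S}(\Gamma) \leq \mathcal{S}(\Gamma_j)$ holds. Since $\Gamma$ is connected and each attachment of a pending edge preserves connectedness and the unweighted nature of the graph, the graph $\Gamma_j$ is again a connected, unweighted graph. Hence Lemma~\ref{LemmaAddingPendingEdge} applies to the passage $\Gamma_j \rightsquigarrow \Gamma_{j+1}$ and yields $\mathcal{S}(\Gamma_j) \leq \mathcal{S}(\Gamma_{j+1})$. Chaining this with the induction hypothesis gives $\mathcal{S}(\Gamma) \leq \mathcal{S}(\Gamma_{j+1})$, which closes the induction after $k$ steps.

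There is essentially no technical obstacle here; the only small point to keep in mind is that Lemma~\ref{LemmaAddingPendingEdge} is formulated for a single pending edge added to an unweighted graph, so one must verify that the inductive hypothesis keeps us within its scope at every step. This is immediate from the recursive construction, since attaching a pending edge to an unweighted graph produces another unweighted graph and leaves connectedness untouched.
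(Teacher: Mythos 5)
Your proof is correct and matches the paper's intent exactly: the paper simply remarks that the proposition "follows readily" by applying Lemma~\ref{LemmaAddingPendingEdge} recursively, which is precisely the induction on the number of attached pending edges that you spell out. The only added value of your write-up is making explicit that each intermediate graph remains unweighted (and connected), so the lemma's hypotheses are preserved at every step.
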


\section{Spectral estimates}\label{sec:spectral-estimates}
The goal of this section is to derive spectral bounds on certain eigenvalues of $H_U$ involving the surface area of a graph and related quantities. Recall that our main spectral bound is derived in Section~\ref{AppendixSpielman}.
\subsection{Bounds on the second eigenvalue $\lambda_2(U)$}\label{sec:bound-sec-eig}
Based on the Cheeger constant defined in~\eqref{CheegerConstant} and using classical estimates, we immediately arrive at the following statement.

\begin{theorem}[An upper bound on $\lambda_2(U)$]\label{TheoremUpperBound} For $U \geq 0$ one has the estimate
	\begin{equation*}
	\lambda_2(U) \leq \lambda_2(0)+\mathcal{S}_U(\Gamma) \leq 2h(\Gamma)+\mathcal{S}_U(\Gamma) \ .
	\end{equation*}
\end{theorem}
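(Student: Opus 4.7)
The plan is to split the chain of inequalities into its two pieces and handle them independently, each by a short appeal to machinery already available.

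For the first inequality $\lambda_2(U) \leq \lambda_2(0) + \mathcal{S}_U(\Gamma)$, I would use Theorem~\ref{TraceTheorem} together with eigenvalue monotonicity under positive perturbations. Observe that $H_U = H_0 + D^{-1/2} U D^{-1/2}$, and since $U\geq 0$, the perturbation $D^{-1/2}UD^{-1/2}$ is positive semidefinite. By the min-max principle, this gives $\lambda_j(U)\geq \lambda_j(0)$ for every $j=1,\dots,n$. Combining this with the second identity of Theorem~\ref{TraceTheorem}, namely
\[
\sum_{j=1}^{n}\bigl(\lambda_j(U)-\lambda_j(0)\bigr)=\mathcal{S}_U(\Gamma),
\]
every summand on the left is non-negative, and therefore the single term $\lambda_2(U)-\lambda_2(0)$ is bounded above by the full sum $\mathcal{S}_U(\Gamma)$.

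For the second inequality $\lambda_2(0) \leq 2h(\Gamma)$, I would simply invoke the classical (easy) direction of Cheeger's inequality for the normalized Laplacian, $\lambda_2(0)\leq 2h(\Gamma)$, as stated e.g.\ in \cite{Chung}. Since our operator $H_0$ is precisely the normalized Laplacian $\mathrm{Id}-D^{-1/2}AD^{-1/2}$ (up to the diagonal contribution from loops, which does not affect the applicable Cheeger bound in the form used), this is immediate. Adding $\mathcal{S}_U(\Gamma)$ to both sides yields the desired bound.

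I expect no real obstacle here: the only mildly delicate point is justifying $\lambda_j(U)\geq \lambda_j(0)$ for all $j$, which is needed to pass from the trace identity (a sum) to a bound on the single eigenvalue $\lambda_2(U)$. This monotonicity is a direct application of the Courant--Fischer min-max principle to the sum of a self-adjoint operator and a positive semidefinite one. Everything else is bookkeeping.
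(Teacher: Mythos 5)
Your argument is correct, and for the first inequality it takes a genuinely different route from the paper. The paper obtains $\lambda_2(U)\leq\lambda_2(0)+\mathcal{S}_U(\Gamma)$ directly from the min-max principle: in the quadratic form \eqref{QuadraticForm} the potential term satisfies $\sum_i \frac{\sigma_i}{\deg(i)}f_i^2 \leq \bigl(\max_i \frac{\sigma_i}{\deg(i)}\bigr)\|f\|^2 \leq \mathcal{S}_U(\Gamma)\|f\|^2$, so every eigenvalue shifts up by at most $\mathcal{S}_U(\Gamma)$ (a Weyl-type perturbation bound). You instead combine the trace identity of Theorem~\ref{TraceTheorem} with the monotonicity $\lambda_j(U)\geq\lambda_j(0)$, which you correctly justify via Courant--Fischer since $D^{-1/2}UD^{-1/2}\succeq 0$; because all the gaps $\lambda_j(U)-\lambda_j(0)$ are non-negative and sum to exactly $\mathcal{S}_U(\Gamma)$, each individual gap is at most $\mathcal{S}_U(\Gamma)$. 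Both proofs are short and both in fact yield the stronger statement $\lambda_j(U)\leq\lambda_j(0)+\mathcal{S}_U(\Gamma)$ for every $j$; the paper's version is more self-contained (it does not invoke the trace formula), while yours makes visible the additional information that the \emph{total} of all eigenvalue shifts equals $\mathcal{S}_U(\Gamma)$, so the bound on any single gap cannot be saturated by more than one index. The second inequality is handled identically in both proofs, by citing the easy direction of Cheeger's inequality $\lambda_2(0)\leq 2h(\Gamma)$ from \cite{Chung}.
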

\begin{proof} We simply observe that the min-max principle implies $\lambda_2(U) \leq \lambda_2(0)+\mathcal{S}_U(\Gamma)$ and combine it with the classical estimate $\lambda_2(0) \leq 2h(\Gamma)$~(see, e.g., \cite{Chung}).
\end{proof}

\subsection{Bounds on the largest eigenvalue $\lambda_n(U)$}\label{sec:randic-bound}
In this section we shall derive a lower bound on the largest eigenvalue of $H_{U}$ which is expressed in terms of the surface area $\mathcal{S}(\Gamma)$, the effective surface area $\mathcal{S}_U(\Gamma)$ and versions of the so-called \textit{Randi\'{c} index}~\cite{RandicIndex,EBRandic}. More explicitly, for $\alpha \in \mathbb{R}$ we set
\begin{equation*}
R_{\alpha}(\Gamma):=\frac{1}{2}\sum_{i,j = 1}^{n}a_{ij}(\deg(i)\deg(j))^{\alpha}\ , 
\end{equation*}
and note that this agrees with the standard Randi\'{c} index in the unweighted case (also note that Randi\'{c} originally considered the case $\alpha=-\frac{1}{2}$). For two subsets $V_a$ and $V_b$ of the vertex set $V$, we also introduce the \textit{restricted Randi\'{c} index} via
\begin{equation*}
R_{\alpha}(\Gamma,V_a\times V_b):=\frac{1}{2}\sum_{i,j=1}^{n}a_{ij}\chi_{V_a}(i)\chi_{V_b}(j)(\deg(i)\deg(j))^{\alpha}\ , 
\end{equation*}
where $\chi_{\hat{V}}:V \rightarrow \{0,1\}$ denotes the characteristic function of a vertex subset $\hat{V} \subset V$; we abbreviate $R_\alpha(\Gamma, \hat{V}) := R_\alpha(\Gamma, \hat{V} \times \hat{V})$ for every subset $\hat{V} \subset V$.

Using the well-known min-max principle, one can characterize the $n$-th eigenvalue $\lambda_n(U)$ via maximizing the so-called \emph{Rayleigh quotient} which leads to the relation
\begin{equation}\label{MinMax}
\lambda_n(U)=\max_{f \in \mathbb{R}^{n} \setminus \{ 0 \}}\frac{\langle f,H_{U} f\rangle_{\mathbb{R}^n}}{\|f\|^2_{\mathbb{R}^n}}\ .
\end{equation}
It is well-known that $\lambda_n(0) \leq 2$ (since we consider the normalized Laplacian) and hence, as in Theorem \ref{TheoremUpperBound}, a first upper bound on $\lambda_n(U)$ is given by
\begin{equation*}
\lambda_n(U)\leq 2+\mathcal{S}_{U}(\Gamma)
\end{equation*}
which also implies $\sigma(H_{U}) \subset \left[0,2+\mathcal{S}_{U}(\Gamma)\right]$. We then set, for $\alpha \in \mathbb{R}$, 
\begin{equation*}
\mathcal{S}_U(\Gamma,\alpha):=\sum_{j=1}^{n}\frac{\sigma_j}{\deg(j)^{\alpha}}
\end{equation*}
and obtain the following statement.
\begin{theorem}[A lower bound on $\lambda_n(U)$]\label{LowerBound} For a graph $\Gamma$ we have
	\begin{equation*}
	\max_{V_a,V_b \subset V} \left(1+\frac{1}{\mathcal{S}(\Gamma)}\big(2R_{-1}(\Gamma,V_a\times V_b)-R_{-1}(\Gamma,V_a)-R_{-1}(\Gamma,V_b)\big)+\frac{\mathcal{S}_U(\Gamma,2)}{\mathcal{S}(\Gamma)}\right) \leq \lambda_n(U)\ ,
	\end{equation*}
	where $V_a\cap V_b=\emptyset$ and $V_a \cup V_b=V$. 
\end{theorem}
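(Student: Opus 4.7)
The plan is to invoke the Rayleigh quotient characterization~\eqref{MinMax} with a test function tailored to the bipartition $(V_a,V_b)$. Given any such partition with $V_a\cap V_b=\emptyset$ and $V_a\cup V_b = V$, I would choose
\[
f_i := \frac{\chi_{V_a}(i) - \chi_{V_b}(i)}{\sqrt{\deg(i)}}, \qquad i \in V,
\]
so that the ``signs'' $c_i := \chi_{V_a}(i)-\chi_{V_b}(i)$ take values in $\{\pm 1\}$ everywhere. The motivation is that this $f$ is simultaneously a signed indicator (encoding the bipartition into the cross term) and a normalised one (so that $\mathcal{S}(\Gamma)$ appears in the denominator). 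Indeed, since $c_i^2 = 1$ for every $i$, one immediately reads off $\|f\|^2_{\mathbb{R}^n} = \sum_{i=1}^n 1/\deg(i) = \mathcal{S}(\Gamma)$, which is exactly the normalising factor in the claim.

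The main step is the evaluation of $\langle f, H_U f\rangle_{\mathbb{R}^n}$ via the second line of~\eqref{QuadraticForm}. Writing $f_i/\sqrt{\deg(i)} = c_i/\deg(i)$ and expanding
\[
\Bigl(\tfrac{c_i}{\deg(i)} - \tfrac{c_j}{\deg(j)}\Bigr)^2 = \tfrac{1}{\deg(i)^2} + \tfrac{1}{\deg(j)^2} - \tfrac{2 c_i c_j}{\deg(i)\deg(j)}
\]
splits the quadratic form into three pieces. The first two diagonal pieces collapse through the identity $\sum_{j} a_{ij} = \deg(i)$ into the single contribution $\mathcal{S}(\Gamma)$. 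The cross piece, proportional to $c_ic_j/(\deg(i)\deg(j))$, sorts itself according to the sign of $c_ic_j$: intra-class pairs contribute $+1$ and assemble (after the factor $1/2$) into $R_{-1}(\Gamma,V_a)$ and $R_{-1}(\Gamma,V_b)$, while inter-class pairs contribute $-1$ and, thanks to the symmetry $a_{ij}=a_{ji}$, produce $R_{-1}(\Gamma,V_a\times V_b)$ with multiplicity two. Finally, the potential part of~\eqref{QuadraticForm} trivially gives $\sum_i \sigma_i/\deg(i)^2 = \mathcal{S}_U(\Gamma,2)$ because $f_i^2 = 1/\deg(i)$.

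Putting everything together and dividing by $\|f\|^2_{\mathbb{R}^n} = \mathcal{S}(\Gamma)$, the diagonal part contributes the constant $1$, the off-diagonal part contributes a multiple of $(2R_{-1}(\Gamma,V_a\times V_b)-R_{-1}(\Gamma,V_a)-R_{-1}(\Gamma,V_b))/\mathcal{S}(\Gamma)$, and the potential contributes $\mathcal{S}_U(\Gamma,2)/\mathcal{S}(\Gamma)$. Since the partition $(V_a,V_b)$ was arbitrary, taking the supremum over admissible bipartitions and applying~\eqref{MinMax} produces the claimed lower bound on $\lambda_n(U)$.

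I do not foresee any serious obstacle: the whole argument is driven by the single observation that $f_i = c_i/\sqrt{\deg(i)}$ is the ``right'' test function in which the three ingredients $\mathcal{S}(\Gamma)$, $\mathcal{S}_U(\Gamma,2)$ and the signed Randi\'c combination emerge simultaneously. The only genuine subtlety is bookkeeping, namely correctly tracking the factor of $2$ coming from the symmetrisation $R_{-1}(\Gamma,V_a\times V_b) = R_{-1}(\Gamma,V_b\times V_a)$ together with the prefactor $1/2$ built into the definition of $R_\alpha$, so that the coefficients match those displayed in the statement.
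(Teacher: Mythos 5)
Your proposal is correct and takes essentially the same route as the paper: the paper's proof inserts exactly the test vector $f_j=\pm 1/\sqrt{\deg(j)}$ (sign according to membership in $V_a$ or $V_b$) into the Rayleigh quotient \eqref{MinMax} and identifies the three contributions just as you do. The factor-of-$2$ bookkeeping you flag is the only delicate point, and the paper handles it no more explicitly than you do: the exact computation produces $2\sum_{(i,j)\in V_a\times V_b}a_{ij}/(d_id_j)-\sum_{(i,j)\in V_a\times V_a}a_{ij}/(d_id_j)-\sum_{(i,j)\in V_b\times V_b}a_{ij}/(d_id_j)$, which is \emph{twice} the Randi\'c bracket appearing in the statement, so both your argument and the paper's in fact yield a slightly stronger bound from which the stated one follows (for loop-free graphs the maximum of the bracket over partitions is nonnegative).
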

\begin{proof} Let $V_a$ and $V_b$ be an arbitrary disjoint decomposition of the vertex set $V$. We insert the vector $f \in \mathbb{R}^n$ defined via
	\begin{equation*}
	f_j:=\pm \frac{1}{\sqrt{\deg(j)}}\ , \quad j=1,\dots,n\ ,
	\end{equation*}
	into the Rayleigh quotient in \eqref{MinMax}, where we choose a plus sign whenever $j$ refers to a vertex in $V_a$ and a minus sign otherwise. Also, $\Vert f \Vert^2_{\mathbb{R}^n} = \mathcal{S}(\Gamma)$, and abbreviating $d_j:=\deg(j)$ we obtain
	\begin{equation*}
	1+\frac{1}{\mathcal{S}(\Gamma)}\left(2\sum_{(i,j) \in V_a \times V_{b}}\frac{a_{ij}}{d_id_j}-\sum_{(i,j) \in V_a \times V_a}\frac{a_{ij}}{d_id_j}-\sum_{(i,j) \in V_{b} \times V_{b}}\frac{a_{ij}}{d_id_j}\right)+\frac{\mathcal{S}_U(\Gamma,2)}{\mathcal{S}(\Gamma)} \leq \lambda_n(U)\ ,
	\end{equation*}
	and from this the statement follows readily.
\end{proof}

\begin{remark}[Optimality of the lower bound for bipartite graphs]
Whenever the graph is bipartite with corresponding disjoint vertex sets $V_a$ and $V_b$, Theorem~\ref{LowerBound} gives the bound 
	\begin{equation*} \left(1+\frac{2R_{-1}(\Gamma,V_a\times V_b)}{\mathcal{S}(\Gamma)}+\frac{\mathcal{S}_U(\Gamma,2)}{\mathcal{S}(\Gamma)}\right) \leq \lambda_n(U)\ .
\end{equation*}
This bound is optimal in the sense that, for a bipartite $d$-regular graph $\Gamma_{d,d}$ with $d = \vert V_a \vert = \vert V_b \vert$ and $U=0$, the lower bound is equal to two. 
\begin{figure}[h]
\begin{tikzpicture}[scale=0.35]
      \tikzset{enclosed/.style={draw, circle, inner sep=0pt, minimum size=.1cm, fill=gray}, every loop/.style={}, every fit/.style={ellipse,draw,inner sep=-2pt,text width=1.5cm, line width=1pt}}

      \node[enclosed] (A) at (0,0) {};
      \node[enclosed] (B) at (0,2) {};
      \node[enclosed] (C) at (0,4) {};
      \node[enclosed] (D) at (0,6) {};
      \node[enclosed] (A') at (8,0) {};
      \node[enclosed] (B') at (8,2) {};
      \node[enclosed] (C') at (8,4) {};
      \node[enclosed] (D') at (8,6) {};
      
      \node [gray,fit=(A) (D),label=below:\textcolor{black!75}{$V_a$}] {};
      \node [gray,fit=(A') (D'),label=below:\textcolor{black!75}{$V_b$}] {};
      \draw (A) edge node[above] {} (A') node[midway, above] (edge1) {};
      \draw (A) edge node[above] {} (B') node[midway, above] (edge2) {};
      \draw (A) edge node[above] {} (C') node[midway, above] (edge3) {};
      \draw (A) edge node[above] {} (D') node[midway, above] (edge4) {};
      \draw (B) edge node[above] {} (A') node[midway, above] (edge1) {};
      \draw (B) edge node[above] {} (B') node[midway, above] (edge2) {};
      \draw (B) edge node[above] {} (C') node[midway, above] (edge3) {};
      \draw (B) edge node[above] {} (D') node[midway, above] (edge4) {};
      \draw (C) edge node[above] {} (A') node[midway, above] (edge1) {};
      \draw (C) edge node[above] {} (B') node[midway, above] (edge2) {};
      \draw (C) edge node[above] {} (C') node[midway, above] (edge3) {};
      \draw (C) edge node[above] {} (D') node[midway, above] (edge4) {};
      \draw (D) edge node[above] {} (A') node[midway, above] (edge1) {};
      \draw (D) edge node[above] {} (B') node[midway, above] (edge2) {};
      \draw (D) edge node[above] {} (C') node[midway, above] (edge3) {};
      \draw (D) edge node[above] {} (D') node[midway, above] (edge4) {};
     \end{tikzpicture}
     \caption{The bipartite $4$-regular graph $\Gamma_{4,4}$ with disjoint sets $V_a$,$V_b$.}\label{fig:dregular}
     \end{figure}
     
     Indeed, considering the bipartite $d$-regular graph as in Figure \ref{fig:dregular} (and potential $U=0$) one can observe quite immediately that $2R_{-1}(\Gamma_{d,d}, V_a \times V_b) = \mathcal{S}(\Gamma_{d,d})$.
\end{remark}
Finally, let us comment on a connection to results obtained in~\cite{JostMulas} where lower (as well as upper bounds) on the largest eigenvalue $\lambda_n(0)$ have also been studied in case of the unweighted normalized Laplacian without external potential (i.e.,  $U=0$) and without loops. In their paper, Jost and Mulas introduce a \emph{Cheeger-like} constant $Q(\Gamma)$ defined by
\[
Q(\Gamma) := \max_{\{i,j\} \in E} \bigg(\frac{1}{\deg(i)}+\frac{1}{\deg(j)} \bigg) \ .
\]
Clearly --by definition-- this constant is dominated by the surface area $\mathcal{S}(\Gamma)$. Furthermore, one can immediately observe that
\begin{align}\label{eq:relation-jost-mulas-surface}
\mathcal{S}(\Gamma) = \sum_{i \in V} \frac{1}{\deg(i)} \leq \sum_{\{i,j\} \in E} \bigg(\frac{1}{\deg(i)} + \frac{1}{\deg(j)}\biggr) \leq \vert E \vert Q(\Gamma)\ .
\end{align}
In \cite[Theorem 3]{JostMulas} it is then shown that
\[
Q(\Gamma) \leq \lambda_n(0) \leq Q(\Gamma) \tau(\Gamma)
\]
where $\tau(\Gamma) < 0.54n$ is some constant.
%
Thus, for unweighted loop-free graphs, using~\eqref{eq:relation-jost-mulas-surface} we immediately arrive at the estimate
\[
\frac{\mathcal{S}(\Gamma)}{\vert E \vert} \leq \lambda_n(0) \leq \mathcal{S}(\Gamma)\tau(\Gamma) \ .
\]


%
%
%
%
\subsection{An improved upper bound}\label{AppendixSpielman}
With respect to Theorem~\ref{TheoremUpperBound} we would like to provide an additional result, namely, an upper bound on $\lambda_2(0)$ for planar graphs (without loops) which is inspired by a result of Spielman and Teng~\cite[Theorem~3.3]{SpTe} who discussed a corresponding bound for the unnormalized Laplacian. Indeed, a generalization of the upper bound derived by Spielman and Teng was already given by Pl\"umer in~\cite[Theorem~3.9]{MP21}: more explicitly, regarding the normalized Laplacian, he showed that
\[
\lambda_2(0) \leq \frac{8\max\deg(\Gamma)}{\sum_{i \in V} \deg(i)} \ ,
\]
where $\max\deg(\Gamma) := \max_{i \in V} \deg(i)$ denotes the \emph{maximal degree} of $\Gamma$. For unweighted graphs this immediately gives
\[
\lambda_2(0) \leq \frac{8\max \deg(\Gamma)}{\mathcal{S}(\Gamma)} \ .
\]
The proof of Spielman and Teng is simple and beautiful and it relies on some geometrical construction which is possible due to planarity of the underlying graph (more explicitly, they employ the \emph{Koebe-Andreev-Thurston embedding theorem}). We shall translate their main ideas to the setting of the normalized weighted Laplacian but estimate differently; by this, we are able to derive an upper bound which is, in some cases, slightly better than the one given above.

For a graph $\Gamma$ we introduce the set of \emph{degree levels} $\deg(\Gamma) := {V/ \hspace{-0.05cm}=_{\deg}}$ as the set of equivalence classes via the relation $=_\mathrm{deg}$ on $V$ which is given by
\[
i =_{\deg} j \:\: :\Longleftrightarrow \:\: \deg(i)=\deg(j) \ . 
\]
Moreover, for $i \in V$, we write $[i]_\mathrm{deg}$ for the corresponding equivalence class with respect to $=_{\deg}$. This allows us to define
\begin{equation*}
\delta(\Gamma):=\sum_{[i]_{\deg} \in \deg(\Gamma)}\frac{1}{\deg(i)}\ .
\end{equation*}
%
We also introduce the quantity
\begin{equation*}
\Theta(\Gamma):=\sum_{i,j: \deg(i)\neq \deg(j) }a_{ij}\left(\frac{1}{\deg(i)^2}+\frac{1}{\deg(j)^2}\right)\  ,
\end{equation*}
and establish the following statement.
\begin{theorem}\label{thm:tweak-spielman} For a finite loop-free planar graph $\Gamma$ one has 
	\begin{equation*}
	\lambda_2(0) \leq \frac{8\delta(\Gamma)+ \Theta(\Gamma)}{\mathcal{S}(\Gamma)}\ .
	\end{equation*}
\end{theorem}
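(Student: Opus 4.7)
The plan is to adapt the Spielman-Teng circle-packing argument, but with a trial function engineered so that the surface area $\mathcal{S}(\Gamma)$ appears directly in the denominator of the Rayleigh quotient. First, I invoke the Koebe-Andreev-Thurston theorem: since $\Gamma$ is planar and loop-free, each vertex $i \in V$ can be represented by a spherical cap on $S^2 \subset \mathbb{R}^3$ with Euclidean center $c_i \in S^2$ and radius $r_i > 0$, such that the caps have pairwise disjoint interiors and adjacent vertices correspond to tangent caps, so that $\|c_i - c_j\|_{\mathbb{R}^3} \leq r_i + r_j$ whenever $a_{ij} > 0$. Following Spielman and Teng \cite{SpTe}, I then apply a M\"obius transformation of $S^2$ to achieve the centering $\sum_{i \in V} c_i = 0$, after which the key cap-area estimate $\sum_{i \in V} r_i^2 \leq 4$ holds.

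Second, I reformulate the Rayleigh characterization of $\lambda_2(0)$. Substituting $f_i = \sqrt{\deg(i)}\, g_i$ in \eqref{QuadraticForm} with $U = 0$ and noting that $D^{1/2}\mathbf{1}$ spans the eigenspace of $\lambda_1(0) = 0$, one obtains
\[
\lambda_2(0) = \min\left\{\frac{\tfrac12\sum_{i,j=1}^{n} a_{ij}(g_i - g_j)^2}{\sum_{i=1}^{n} \deg(i)\, g_i^2} \,:\, 0 \neq g \in \mathbb{R}^n,\; \sum_{i=1}^{n} \deg(i)\, g_i = 0\right\}\ .
\]
As a vector-valued trial function I take $g_i := c_i / \deg(i) \in \mathbb{R}^3$; the orthogonality condition $\sum_i \deg(i)\, g_i = \sum_i c_i = 0$ is then automatic. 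Summing the scalar Rayleigh bounds over the three coordinates $\alpha \in \{x, y, z\}$ and using the elementary fact $\min_\alpha N_\alpha/D_\alpha \leq (\sum_\alpha N_\alpha)/(\sum_\alpha D_\alpha)$ together with $\|c_i\|_{\mathbb{R}^3} = 1$, I arrive at
\[
\lambda_2(0) \leq \frac{\tfrac12\sum_{i,j=1}^{n} a_{ij}\,\|c_i/\deg(i) - c_j/\deg(j)\|_{\mathbb{R}^3}^2}{\mathcal{S}(\Gamma)}\ ,
\]
so the denominator equals the desired $\mathcal{S}(\Gamma)$ by design of the trial function.

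Third, I bound the numerator by splitting the double sum according to whether $\deg(i) = \deg(j)$ or not. On edges joining vertices of different degree, the elementary inequality $\|a - b\|^2 \leq 2(\|a\|^2 + \|b\|^2)$ combined with $\|c_i\|_{\mathbb{R}^3} = 1$ produces exactly $\Theta(\Gamma)$. On edges joining vertices of the same degree I use the tangency bound $\|c_i - c_j\|_{\mathbb{R}^3} \leq r_i + r_j$, then $(r_i + r_j)^2 \leq 2(r_i^2 + r_j^2)$, and exploit $\deg(i) = \deg(j)$ to symmetrize the resulting sum into $2\sum_i r_i^2\, n_\mathrm{same}(i)/\deg(i)^2$, where $n_\mathrm{same}(i) := \sum_{j:\,\deg(j) = \deg(i)} a_{ij} \leq \deg(i)$. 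After bounding $n_\mathrm{same}(i)/\deg(i) \leq 1$ and grouping by degree level $d$, I apply $\sum_{i: \deg(i) = d} r_i^2 \leq \sum_i r_i^2 \leq 4$ once per distinct degree, which produces exactly $8\,\delta(\Gamma)$, that is, one contribution $4/d$ per equivalence class $[i]_{\deg}$, multiplied by the factor $2$ from the arithmetic-quadratic step.

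The main subtlety lies in this last bookkeeping step: the factor $\delta(\Gamma)$ (sum over equivalence classes) rather than $\mathcal{S}(\Gamma)$ (sum over vertices) arises because the cap-area estimate $\sum_i r_i^2 \leq 4$ is applied only once per degree level, after the factor $1/d$ has been extracted. It is precisely this refinement that produces the improvement over the Spielman-Teng-Pl\"umer bound in regimes where the degree sequence uses only a few distinct values, since the crude estimate $\deg(i) \leq \max\deg(\Gamma)$ is sidestepped in favour of the finer partitioning through degree equivalence classes, while the inter-class contribution is fully controlled by $\Theta(\Gamma)$.
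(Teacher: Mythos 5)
Your proof is correct and follows essentially the same route as the paper: the same Spielman--Teng cap embedding with the trial vectors $c_i/\deg(i)$ (the paper's $w_i/\sqrt{\deg(i)}$ in the symmetric gauge), the same denominator $\mathcal{S}(\Gamma)$, and the same split of the numerator into same-degree pairs (yielding $8\delta(\Gamma)$ via one application of $\sum_i r_i^2\leq 4$ per degree class) and different-degree pairs (yielding $\Theta(\Gamma)$). The only difference is cosmetic: you work with the harmonic form of the Rayleigh quotient and make the bookkeeping via $n_{\mathrm{same}}(i)\leq\deg(i)$ explicit, whereas the paper phrases the same estimate through the Embedding Lemma and the bound $\sum_{i,j\in[k]_{\deg}}a_{ij}(r_i^2+r_j^2)\leq 2\deg(k)\sum_{i\in[k]_{\deg}}r_i^2$.
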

\begin{proof} In a first step, we translate the \emph{Embedding lemma} \cite[Lemma~3.1]{SpTe} to our setting: we obtain
	\begin{equation}\label{Condition}
	\lambda_2(0) = \inf \frac{\frac{1}{2}\sum\limits_{i,j \in V}a_{ij}\Big\|\frac{v_i}{\sqrt{\deg(i)}}-\frac{v_j}{\sqrt{\deg(j)}}\Big\|^2_{\mathbb{R}^n}}{\sum\limits_{i \in V}\|v_i\|^2_{\mathbb{R}^n}}
	\end{equation}
	where the infimum is taken over all vectors $v_1,\dots,v_n \in \mathbb{R}^n$ which are such that 
	\begin{equation}\label{ConditionII}
	\sum_{i=1}^{n}\sqrt{\deg(i)}v_i=0\ .
	\end{equation}
	In a next step, we translate the methods used in the proof of \cite[Theorem~3.3]{SpTe}: The key idea is that the assumed planarity of the graph allows one to embed the graph on a sphere in such a way that two spherical caps (one associated to each vertex) touch each other if and only if two vertices are connected. More explicitly, we use the normalized vectors $w_1,\dots,w_n\in \mathbb{R}^n$ characterized in \cite[Theorem~3.3]{SpTe} and which point to the center of caps on the unit sphere; note that all $w_i$ sum up to zero by construction. We rescale each vector $w_j$ by the factor $\frac{1}{\sqrt{\deg(j)}}$ (in this way we make sure that condition~\eqref{ConditionII} is satisfied), in other words, we replace the vector $v_i$ in \eqref{ConditionII} by $\frac{w_i}{\sqrt{\deg(i)}} \in \mathbb{R}^n$. 
	
	
	With~\eqref{Condition} we then arrive at
	\begin{equation*}\begin{split}
	2\lambda_2(0) &\leq \frac{\sum_{i,j \in V}a_{ij}\big\|\frac{w_i}{\deg(i)}-\frac{w_j}{\deg(j)}\big\|^2_{\mathbb{R}^n}}{\mathcal{S}(\Gamma)} \\
	&\leq  \frac{\sum_{[k]_{\deg} \in \deg(\Gamma)}\frac{1}{\deg(k)^2}\sum_{i,j \in [k]_{\deg}}a_{ij}\left\|w_i-w_j\right\|^2_{\mathbb{R}^n}+2 \Theta(\Gamma)}{\mathcal{S}(\Gamma)} \\
	&\leq  \frac{16\sum_{[k]_{\deg} \in \deg(\Gamma)}\frac{1}{\deg(k)}+2 \Theta(\Gamma)}{\mathcal{S}(\Gamma)} = \frac{16 \delta(\Gamma) + 2 \Theta(\Gamma)}{\mathcal{S}(\Gamma)}\ .
	\end{split}
	\end{equation*}
	Here, the last inequality follows using an estimate similar as in~\cite[Theorem~3.3]{SpTe}: more precisely, for every $k \in V$, one has
	\begin{equation*}\begin{split}
	\sum_{i,j \in [k]_{\deg}}a_{ij}\left\|w_i-w_j\right\|^2_{\mathbb{R}^n} &\leq 2 \sum_{i,j \in [k]_{\deg}}a_{ij} \cdot (r^2_i + r^2_j)\\
	&\leq 4\deg(k) \cdot \sum_{i \in [k]_{\deg}}  r^2_i  \\
	&\leq 16 \deg(k)\ ,
	\end{split}
	\end{equation*}
	where $r_i$ refers to the radius of the cap on the unit sphere associated with the vertex $i$ such that $\Vert w_i - w_j\Vert_{\RR^n}^2$ can be estimated from above by $2(r_i^2 + r_j^2)$ for every pair of vertices $i,j \in V$. Taking into account that $\sum_{i=1}^{n}r^2_i \leq 4$ yields the claimed upper bound.
\end{proof}
Finally, we want to construct a class of graphs where Theorem \ref{thm:tweak-spielman} yields a slightly better upper bound than \cite[Theorem~3.9]{MP21}.
\begin{example}
	For $m,n \in \mathbb{N}$ let $S_{m,n} = S_{m,n}(V,E)$ denote the unweighted graph which is given by a star on $m > 2$ outer vertices which is connected at the central vertex to a path graph on $n$ edges (see Figure~\ref{fig:star-path-graph}) so that $S_{n,m}$ has $\vert V \vert = m+n+1$ vertices.
	\begin{figure}[h]
		\begin{tikzpicture}[scale=0.45]
		\tikzset{enclosed/.style={draw, circle, inner sep=0pt, minimum size=.1cm, fill=gray}, every loop/.style={}}
		
		\node[enclosed, fill=black] (Z) at (0,4) {};
		\node[enclosed, fill=white] (A) at (-1.75,5.75) {};
		\node[enclosed, fill=white] (B) at (-2.5,4) {};
		\node[enclosed, fill=white] (C) at (1.75,5.75) {};
		\node[enclosed, fill=white] (D) at (-1.75,2.25) {};
		\node[enclosed, fill=white] (E) at (1.75,2.25) {};
		\node[enclosed, fill=white] (F) at (0,6.5) {};
		\node[enclosed, fill=white] (G) at (0,1.5) {};
		\node[enclosed] (H) at (2.5,4) {};
		\node[enclosed] (I) at (5,4) {};
		\node[enclosed] (J) at (7.5,4) {};
		\node[enclosed] (K) at (10,4) {};
		\node[enclosed] (L) at (12.5,4) {};

		\draw (Z) edge node[above] {} (A) node[midway, above] (edge1) {};
		\draw (Z) edge node[above] {} (B) node[midway, above] (edge2) {};
		\draw (Z) edge node[above] {} (C) node[midway, above] (edge3) {};
		\draw (Z) edge node[above] {} (D) node[midway, above] (edge4) {};
		\draw (Z) edge node[above] {} (E) node[midway, above] (edge5) {};
		\draw (Z) edge node[above] {} (F) node[midway, above] (edge6) {};
		\draw (Z) edge node[above] {} (G) node[midway, above] (edge7) {};
		\draw (Z) edge node[above] {} (H) node[midway, above] (edge7) {};
		\draw (H) edge node[above] {} (I) node[midway, above] (edge7) {};
		\draw (I) edge node[above] {} (J) node[midway, above] (edge7) {};
		\draw (J) edge node[above] {} (K) node[midway, above] (edge7) {};
		\draw (K) edge node[above] {} (L) node[midway, above] (edge7) {};
		\end{tikzpicture}
  \vspace{-0.3cm}
		\caption{The graph $S_{7,5}$ on $7$ outer (white) and $5$ path (gray) vertices.}\label{fig:star-path-graph}
	\end{figure}
	We first observe that $\delta(S_{m,n}) = 1+ \frac{1}{m+1}+\frac{1}{2}$. Moreover, one can observe that
	\begin{align*}
	\Theta(S_{m,n}) &= 2m\bigg(\frac{1}{1^2} + \frac{1}{(m+1)^2} \bigg) + 2\bigg(\frac{1}{(m+1)^2} + \frac{1}{2^2} \bigg) + 2\bigg(\frac{1}{1^2} + \frac{1}{2^2} \bigg) \\&= 2m  + 3 + \frac{2}{m+1},
	\end{align*}
	whereas for the surface area one has $\mathcal{S}(S_{m,n}) = m + \frac{1}{m+1} + \frac{n+1}{2}$. This yields the upper bound
	\begin{align*}
	\frac{8\delta(S_{m,n}) + \Theta(S_{m,n})}{\mathcal{S}(S_{m,n})} = \frac{8(\frac{3}{2} + \frac{1}{m+1}) + 2m+3+\frac{2}{m+1}}{m + \frac{1}{m+1} + \frac{n+1}{2}} = \frac{15 + \frac{10}{m+1} + 2m}{m + \frac{1}{m+1} + \frac{n+1}{2}}.
	\end{align*}
	On the other hand, the bound obtained in \cite[Theorem~3.9]{MP21} reads as $\frac{8(m+1)}{2m+2n}$. Hence, we want to study the gap
	\[
	\gamma(m,n) := \frac{15 + \frac{10}{m+1} + 2m}{m + \frac{1}{m+1} + \frac{n+1}{2}} - \frac{8(m+1)}{2m+2n}=\frac{2(-2m^3+7m^2+13mn+13m+23n-6)}{(m+n)(2m^2+mn+3m+n+3)},
	\]
	for $m,n \in \mathbb{N}$ and determine cases where this gap is in fact negative. For example, one may set $n=\alpha m$ with a fixed $\alpha \in \mathbb{N}$. Then, it readily follows that $\gamma(m,\alpha m) < 0$ for all $m$ large enough. In addition, by choosing both $m,\alpha$ large enough, one also has
	\[
	\frac{8\delta(S_{m,\alpha m}) + \Theta(S_{m,\alpha m})}{\mathcal{S}(S_{m,\alpha m})} \leq \frac{3}{1+\frac{\alpha}{2}} < 2\ .
	\]
Since $\lambda_2(0)\leq 2$ holds trivially, we thus obtain a non-trivial bound in this case.
\end{example}

\subsection*{Acknowledgement}{} PB was supported by the Deutsche Forschungsgemeinschaft DFG
(Grant 397230547). This article is based on work from COST Action 18232 MAT-DYN-NET, supported by COST (European Cooperation in Science and Technology), https://
www.cost.eu/actions/CA18232/. We shall thank Delio Mugnolo (Hagen) and Davide Bianchi (Shenzhen) for interesting discussions and Amru Hussein (RPTU) for suggesting the name \textit{social graph}.

\appendix

	\vspace*{0.5cm}
	
	{\small
		\bibliographystyle{amsalpha}
		\bibliography{Literature}}

\end{document}